\documentclass[reqno]{amsart}

\newcommand{\mat}[2]{\ensuremath{{#1}^{#2\times #2}}}

\newcommand{\rmat}[3]{\ensuremath{{#1}}^{#2\times #3}}

\newcommand{\nspace}[2]{\ensuremath{{\mathbb{#1}}^{#2}}}
\newcommand{\Hspace}[1]{\ensuremath{\mathcal{#1}}}
\newtheorem{thm}{Theorem}[section]

\newtheorem{lem}[thm]{Lemma}

\theoremstyle{remark}
\newtheorem{rem}[thm]{Remark}

\renewcommand{\Re}{\operatorname{Re}}

\newcommand{\diag}{\operatorname{Diag}}

\newcommand{\row}{\operatorname{Row}}
\newcommand{\col}{\operatorname{Col}}

\newcommand{\rank}{\operatorname{rank}}
\numberwithin{equation}{section}

\newcommand{\cB}{{\mathcal B}}

\newcommand{\cM}{{\mathcal M}}

\newcommand{\cU}{{\mathcal U}}

\newcommand{\cX}{{\mathcal X}}

\usepackage{amsfonts}
\begin{document}

\title[Rational Cayley inner Herglotz-Agler functions]{Rational
Cayley inner Herglotz-Agler functions: positive-kernel decompositions
and transfer-function realizations}
\author[J.A.~Ball]{Joseph A. Ball}
\address{Department of Mathematics \\ Virginia Tech \\
Blacksburg, VA, 24061} \email{joball@math.vt.edu}
\author[D.S.~Kaliuzhnyi-Verbovetskyi]{Dmitry S.
Kaliuzhnyi-Verbovetskyi}
\thanks{The authors were partially supported by US--Israel BSF grant 2010432.
The second author was also partially supported by NSF grant
DMS-0901628, and wishes to thank the Department of Mathematics at
Virginia Tech for hospitality during his sabbatical visit in
January--May 2013, when a significant part of work on the paper
was done.}

\address{Department of Mathematics \\
Drexel University\\
3141 Chestnut Str.\\
  Philadelphia, PA, 19104}
\email{dmitryk@math.drexel.edu}
\date{}

\begin{abstract}
The Bessmertny\u{\i} class consists of rational matrix-valued
functions of $d$ complex variables representable as the Schur
complement of a block of a linear pencil
$A(z)=z_1A_1+\cdots+z_dA_d$ whose coefficients $A_k$ are
 positive semidefinite matrices. We show that it
coincides with the subclass of rational
functions in the Herglotz--Agler class over the right
poly-halfplane which are homogeneous of degree one and which are Cayley inner. The latter means that such
a function is holomorphic on the right poly-halfplane and takes
skew-Hermitian matrix values on $(i\mathbb{R})^d$, or
equivalently, is the double Cayley transform (over the variables
and over the matrix values) of an inner function on the unit
polydisk.
 Using Agler--Knese's characterization of rational inner Schur--Agler functions on the
 polydisk, extended now to the matrix-valued case, and applying appropriate Cayley transformations, we
 obtain characterizations of matrix-valued rational Cayley inner Herglotz--Agler functions both in the setting
  of the polydisk and of the right poly-halfplane, in terms of transfer-function realizations and in terms
   of positive-kernel decompositions. In particular, we extend Bessmertny\u{\i}'s representation to rational Cayley
   inner Herglotz--Agler functions on the right poly-halfplane, where a linear pencil $A(z)$ is now in the form
   $A(z)=A_0+z_1A_1+\cdots +z_dA_d$ with $A_0$ skew-Hermitian and the other coefficients $A_k$ positive
   semidefinite matrices.
\end{abstract}

\subjclass[2010]{32A10; 47A48; 47A56} \keywords{Rational inner
functions; Schur--Agler class; Herglotz--Agler class;
Bessmertny\u{\i} class; long resolvent representation;
positive-kernel decomposition; transfer-function realization}

\maketitle

\section{Introduction}\label{sec:intro}
In the 1980s, M. F. Bessmertny\u{\i} (see \cite{Bes,Bes1,Bes2,Bes3,Bes4}) studied $n\times n$ matrix-valued
rational functions of $d$ variables which admit a so-called finite-dimensional long resolvent representation,
\begin{equation}\label{eq:long-res} f(z)=A_{11}(z)-A_{12}(z)A_{22}(z)^{-1}A_{21}(z),
\quad z=(z_1,\ldots,z_d)\in\nspace{C}{d}.
\end{equation}
Here \begin{equation}\label{eq:lin-pencil} A(z)=A_0+z_1A_1+\cdots+z_dA_d=\begin{bmatrix} A_{11}(z) & A_{12}(z)\\
A_{21}(z) & A_{22}(z)
\end{bmatrix}
\end{equation}
is a linear $\mat{\mathbb{C}}{(n+m)}$-valued function. He showed
that if no additional restrictions on $f$ are assumed, then such a
representation \eqref{eq:long-res} always exists. If, moreover,
$f$ satisfies an additional condition (a)
$f(z)=\overline{f(\bar{z})}$ (resp., (b) $f(z)=f(z)^\top$, (c)
$f(\lambda z)=\lambda f(z)$,
$\lambda\in\mathbb{C}\setminus\{0\}$), then one can choose the
matrices $A_k$, $k=0,\ldots,d$, to be (a) real (resp., (b)
symmetric, (c) such that $A_0=0$).

A particular role in Bessmertny\u{\i}'s work is played by functions of the form \eqref{eq:long-res} with $A_0=0$
and $\overline{A_k}=A_k^\top=A_k\ge 0$), $k=1,\ldots,d$ (i.e., matrices $A_k$ in \eqref{eq:lin-pencil} are
assumed to be real, symmetric, and positive semidefinite), with motivation coming from electrical engineering. He
proved that such functions form the class (which we denote by
${\mathbb{R}}{\mathcal{B}_d^{n\times n}}$) of
characteristic functions of passive $2n$-poles, where impedances of elements (resistances, capacitances,
inductances, and ideal transformers are allowed) are considered as independent variables. (To put it in a
broader context of multidimensional circuit synthesis, see \cite{B}.) It is easy to see that a function
$f\in\mathbb{R}\mathcal{B}_d^{n\times n}$ satisfies the conditions
\begin{equation}\label{eq:homog}
f(\lambda z)=\lambda f(z),\quad \lambda\in\mathbb{C}\setminus\{0\},
\end{equation}
\begin{equation}\label{eq:holomorphic}
f\ {\rm is\ holomorphic\ on\ } \Pi^d,
\end{equation}
\begin{equation}\label{eq:positive}
f(z)+f(z)^*\ge 0,\quad z\in\Pi^d,
\end{equation}
where $\Pi^d=\{z\in\nspace{C}{d}\colon \Re z_k>0,\ k=1,\ldots,d\}$ is the open right poly-halfplane, and
\begin{equation}\label{eq:real}
f(\bar{z})=f(z)^*=\overline{f(z)},
\end{equation}
where $\bar{z}=(\bar{z}_1,\ldots,\bar{z}_d)\in\nspace{C}{d}$. In other words, the class
$\mathbb{R}\mathcal{B}_d^{n\times n}$ is a subclass of the class   of rational $n\times n$ matrix-valued
homogeneous (of degree 1) positive real functions of $d$ variables, denoted $\mathbb{R}\mathcal{P}_d^{n\times
n}$.

We will also consider here the classes $\mathcal{B}_d^{n\times n}=\mathbb{C}\mathcal{B}_d^{n\times n}$ and
$\mathcal{P}_d^{n\times n}=\mathbb{C}\mathcal{P}_d^{n\times n}$. The first one is obtained if we relax the
condition that matrices $A_k$ in a representation \eqref{eq:long-res}--\eqref{eq:lin-pencil} have real entries
and require just $A_0=0$ and $A_k^*=A_k\ge 0$, $k=1,\ldots,d$, and the second one is obtained if we relax the
condition \eqref{eq:real} and require just
\begin{equation}\label{eq:selfadj}
f(\bar{z})=f(z)^*.
\end{equation}
 Thus we have
$\mathbb{R}\mathcal{B}_d^{n\times n}\subseteq\mathcal{B}_d^{n\times n}$ and $\mathbb{R}\mathcal{P}_d^{n\times
n}\subseteq\mathcal{P}_d^{n\times n}$. We also have $\mathcal{B}_d^{n\times n}\subseteq\mathcal{P}_d^{n\times
n}$.

The case of $d=1$ is not interesting: the classes $\mathbb{R}\mathcal{B}_1^{n\times n}$ and
$\mathbb{R}\mathcal{P}_1^{n\times n}$ (resp., $\mathcal{B}_1^{n\times n}$ and $\mathcal{P}_1^{n\times n}$)
coincide and consist of functions of the form $f(z)=Az$ with a $n\times n$ matrix $A$ satisfying
$\overline{A}=A^\top=A\ge 0$ (resp., $A^*=A\ge 0$). If $d=2$, then we also have the coincidence of the classes:
$\mathbb{R}\mathcal{B}_2^{n\times n}=\mathbb{R}\mathcal{P}_2^{n\times n}$ and $\mathcal{B}_2^{n\times
n}=\mathcal{P}_2^{n\times n}$; the first equality was shown by Bessmertny\u{\i} in \cite{Bes1}, and exactly the
same argument works to show the second equality. The question on whether the inclusions
$\mathbb{R}\mathcal{B}_d^{n\times n}\subseteq\mathbb{R}\mathcal{P}_d^{n\times n}$ and $\mathcal{B}_d^{n\times
n}\subseteq\mathcal{P}_d^{n\times n}$ are proper for $d\ge 3$ is open.

Bessmertny\u{\i} has found some necessary conditions for a function $f$ to belong to the class
$\mathbb{R}\mathcal{B}_d^{n\times n}$, however no necessary and sufficient conditions for that in intrinsic
function-theoretical terms (as opposed to the existence of a certain representation) were established in his
work.

In \cite{K-V}, the classes above were generalized as follows. Let $\Hspace{U}$ be a (complex) Hilbert space. The
class $\mathcal{B}_d(\Hspace{U})$ consists of $L(\Hspace{U})$-valued functions $f$ holomorphic on the domain
$$\Omega_d=\bigcup_{\lambda\in\mathbb{T}}(\lambda\Pi)^d\subset\nspace{C}{d}$$ (here, for a fixed
$\lambda\in\mathbb{T}$, we have $\lambda\Pi=\{\lambda z\colon z\in\Pi\}$) and representable there in the form
\eqref{eq:long-res}--\eqref{eq:lin-pencil} where the operators $A_0=0$ and $A_k\in L(\Hspace{U}\oplus\Hspace{H})$
are positive semidefinite (hence selfadjoint), with some Hilbert space $\Hspace{H}$, $k=1,\ldots,d$. Here we
denote the space of bounded linear operators acting from a Hilbert space $\Hspace{X}$ to a Hilbert space
$\Hspace{Y}$ (resp., to $\Hspace{X}$ itself) by $L(\Hspace{X},\Hspace{Y})$ (resp., by $L(\Hspace{X})$). The class
$\mathcal{P}_d(\Hspace{U})$ consists of $L(\Hspace{U})$-valued functions $f$ holomorphic on $\Omega_d$ and
satisfying \eqref{eq:homog}, \eqref{eq:positive}, and \eqref{eq:selfadj}.

Recall \cite{K-V} that a mapping $\iota\colon \Hspace{U}\to\Hspace{U}$ is called an anti-unitary involution of a
Hilbert space $\Hspace{U}$ if $\iota^2=\iota$ and $\langle\iota u_1,\iota u_2\rangle=\langle u_2, u_1\rangle$ for
any $u_1,u_2\in\Hspace{U}$. Such a mapping is anti-linear and bijective. We say that an operator $T\in
L(\Hspace{U},\Hspace{Y})$ is $(\iota_\Hspace{U},\iota_\Hspace{Y})$-real if $\iota_\Hspace{U}$ and
$\iota_\Hspace{Y}$ are anti-unitary involutions of Hilbert spaces $\Hspace{U}$ and $\Hspace{Y}$ and
$\iota_\Hspace{Y}T=T\iota_\Hspace{U}$. (In the case where $\Hspace{Y}=\Hspace{U}$ and
$\iota_\Hspace{Y}=\iota_\Hspace{U}$, we just say ``$\iota_{\Hspace{U}}$-real".) Let
$\Omega\subseteq\nspace{C}{d}$ be a set invariant under (entrywise) complex conjugation, and let
$\iota_\Hspace{U}$ and $\iota_\Hspace{Y}$ be anti-unitary involutions of Hilbert spaces $\Hspace{U}$ and
$\Hspace{Y}$. We say that a function $f\colon\Omega\to L(\Hspace{U},\Hspace{Y})$ is
$(\iota_\Hspace{U},\iota_\Hspace{Y})$-real if $f^\sharp(z)=f(z)$, $z\in\Omega$, where
$f^\sharp(z)=\iota_\Hspace{Y}f(\bar{z})\iota_\Hspace{U}$. (In the case where $\Hspace{Y}=\Hspace{U}$ and
$\iota_\Hspace{Y}=\iota_\Hspace{U}$, we just say ``$\iota_{\Hspace{U}}$-real".) If $\Hspace{U}=\nspace{C}{n}$,
$\Hspace{Y}=\nspace{C}{m}$ and $\iota_\Hspace{U}$, $\iota_\Hspace{Y}$ are complex conjugations, then the matrix
of a $(\iota_\Hspace{U},\iota_\Hspace{Y})$-real operator $T\in L(\nspace{C}{n},\nspace{C}{m})$ in the standard
bases has all real entries, and a $(\iota_\Hspace{U},\iota_\Hspace{Y})$-real function $f\colon\Omega\to
L(\nspace{C}{n},\nspace{C}{m})$ satisfies $f(\bar{z})=\overline{f(z)}$, $z\in\Omega$ --- we will call such a
function real. The class $\iota\mathbb{R}\mathcal{P}_d(\Hspace{U})$ is a subclass of
$\mathcal{P}_d(\Hspace{U})$ consisting of $\iota$-real functions, where $\iota=\iota_\Hspace{U}$ is an
anti-unitary involution of $\Hspace{U}$. The class
 $\iota\mathbb{R}\mathcal{B}_d(\Hspace{U})$ consists of functions $f$
for which there exist a Hilbert space $\Hspace{H}$, an
anti-unitary involution $\iota_\Hspace{H}$ of $\Hspace{H}$, and a long resolvent representation
\eqref{eq:long-res}--\eqref{eq:lin-pencil} of $f$ such that $A_0=0$ and the operators $A_k\in
L(\Hspace{U}\oplus\Hspace{H})$, $k=1,\ldots,d$, are (selfadjoint) positive semidefinite and $\iota_\Hspace{U}\oplus\iota_\Hspace{H}$-real.

Thus the classes $\mathcal{B}_d(\Hspace{U})$, $\mathcal{P}_d(\Hspace{U})$,
$\iota\mathbb{R}\mathcal{B}_d(\Hspace{U})$, and $\iota\mathbb{R}\mathcal{P}_d(\Hspace{U})$ are generalizations of
the classes $\mathcal{B}_d^{n\times n}$, $\mathcal{P}_d^{n\times n}$, $\mathbb{R}\mathcal{B}_d^{n\times n}$, and
$\mathbb{R}\mathcal{P}_d^{n\times n}$, respectively. For these generalized classes we also have that
$\mathcal{B}_d(\Hspace{U})\subseteq\mathcal{P}_d(\Hspace{U})$,
$\iota\mathbb{R}\mathcal{B}_d(\Hspace{U})\subseteq\iota\mathbb{R}\mathcal{P}_d(\Hspace{U})$ (of course, for the
same $\iota$ in both classes in the last inclusion); if $d=1,2$, then these inclusions are equalities; the class
$\mathcal{B}_1(\Hspace{U})=\mathcal{P}_1(\Hspace{U})$ (resp.,
$\iota\mathbb{R}\mathcal{B}_1(\Hspace{U})=\iota\mathbb{R}\mathcal{P}_1(\Hspace{U})$) consists of functions of the
form $f(z)=Az$ with a positive semidefinite operator $A\in L(\Hspace{U})$ (resp., with a $\iota$-real positive
semidefinite operator $A$); the question on whether the inclusions are proper for $d\ge 3$ is open.

In \cite{K-V}, several characterizations of the classes $\mathcal{B}_d(\Hspace{U})$ and
$\iota\mathbb{R}\mathcal{B}_d(\Hspace{U})$ were obtained via the double Cayley transformation which establishes
the relation of these classes to the Schur--Agler class $\mathcal{SA}_d(\Hspace{U})$. Let
$f\in\mathcal{P}_d(\Hspace{U})$. The double Cayley transform of $f$, denoted  $\mathcal{F}=\mathcal{C}(f)$, is defined as
\begin{equation}\label{eq:double-Cayley}
\mathcal{F}(\zeta)=\left(f\Big(\frac{1+\zeta_1}{1-\zeta_1},\ldots,\frac{1+\zeta_d}{1-\zeta_d}\Big)-I_\Hspace{U}\right)
\left(f\Big(\frac{1+\zeta_1}{1-\zeta_1},\ldots,\frac{1+\zeta_d}{1-\zeta_d}\Big)+I_\Hspace{U}\right)^{-1},\
\zeta\in\nspace{D}{d},
\end{equation}
where $\nspace{D}{d}=\{\zeta\in\nspace{C}{d}\colon |\zeta_k|<1,\ k=1,\ldots,d\}$ is the open unit polydisk. It is
easy to see that the function $\mathcal{F}$ is holomorphic and contractive in $\nspace{D}{d}$ (the latter means that
$\|\mathcal{F}(\zeta)\|\le 1$, $\zeta\in\nspace{D}{d}$), i.e., $\mathcal{F}$ belongs to the $d$-variable Schur class
$\mathcal{S}_d(\Hspace{U})$, and that $\mathcal{F}$ is inner, i.e., the boundary values of $\mathcal{F}$  are unitary operators
almost everywhere on the distinguished boundary $\nspace{T}{d}=\{\zeta\in\nspace{C}{d}\colon |\zeta_k|=1,\
k=1,\ldots,d\}$ of the polydisk $\nspace{D}{d}$. The Schur--Agler class $\mathcal{SA}_d(\Hspace{U})$ is a
subclass of $\mathcal{S}_d(\Hspace{U})$ consisting of functions
$\mathcal{F}(\zeta)=\sum_{t\in\mathbb{Z}^d_+}\widehat{\mathcal{F}}_t\zeta^t$ (here $\zeta^t=\zeta_1^{t_1}\cdots \zeta_d^{t_d}$)
satisfying $\|\mathcal{F}(T)\|\le 1$ for every $T\in\mathcal{C}^d$, the class of $d$-tuples of commuting strict
contractions on a Hilbert space, say $\Hspace{K}$, where $\mathcal{F}(T)=\sum_{t\in\mathbb{Z}^d_+}\widehat{\mathcal{F}}_t\otimes
T^t\in L(\Hspace{U}\otimes\Hspace{K})$ and $T^t=T_1^{t_1}\cdots
T_d^{t_d}$.

 Here we say
that the function $\Theta \colon \Lambda \times \Lambda
\to L(\cU)$ is a {\em positive kernel on a set $\Lambda$} if it
holds that
\begin{equation}   \label{posker}
\sum_{i,j=1}^{N} \langle \Theta(\lambda_{i}, \lambda_{j}) u_{j}, u_{i}
\rangle_{\cU} \ge 0 \text{ for all } \lambda_{1}, \dots, \lambda_{N} \in \Lambda,\ u_{1}, \dots, u_{N} \in \cU,
\end{equation}
for all $N=1,2, \dots$.  An equivalent condition is that there exist
a Hilbert space $\cM$ and a function $\theta \colon \Lambda \to L(\cU, \cM)$
so that
\begin{equation}   \label{Kolmogorovdecom}
    \Theta(\omega, \zeta) = \theta(\omega)^{*} \theta(\zeta) \text{ for all }
    \omega, \zeta \in \Lambda.
\end{equation}

\begin{thm}[\cite{Ag}]\label{thm:Agler}
Let $\mathcal{F}$ be a holomorphic $L(\Hspace{U})$-valued function on $\nspace{D}{d}$. The following statements are
equivalent:
\begin{itemize}
    \item[(1)] $\mathcal{F}\in\mathcal{SA}_d(\Hspace{U})$.
    \item[(2)] There exist  positive
    kernels $\Theta_k(\omega, \zeta)$
    on $\nspace{D}{d}$, $k=1,\ldots,d$,  holomorphic in $\zeta$ and
    anti-holomorphic in $\omega$, such that
    \begin{equation}\label{eq:Agler-decomp}
I_\Hspace{U}-\mathcal{F}(\omega)^*\mathcal{F}(\zeta)=\sum_{k=1}^d
(1-\overline{\omega}_k\zeta_k)\Theta_k(\omega, \zeta),\quad
\omega,\zeta\in\nspace{D}{d}.
    \end{equation}
    \item[(2$^\prime$)] There exist Hilbert spaces $\Hspace{M}_k$  and holomorphic
    $L(\Hspace{U},\Hspace{M}_k)$-valued functions $\theta_k$ on $\nspace{D}{d}$, $k=1,\ldots,d$, such that
  \begin{equation}\label{eq:Agler-decomp'}
I_\Hspace{U}-\mathcal{F}(\omega)^*\mathcal{F}(\zeta)=\sum_{k=1}^d
(1-\overline{\omega}_k\zeta_k)\theta_k(\omega)^*\theta_k(\zeta),\quad
\omega,\zeta\in\nspace{D}{d}.
    \end{equation}
    \item[(3)] There exist Hilbert spaces $\Hspace{X}$, $\Hspace{X}_1$, \ldots,
    $\Hspace{X}_d$ with $\Hspace{X}=\bigoplus_{k=1}^d\Hspace{X}_k$, and a unitary operator
\begin{equation}\label{eq:u}
  U=\begin{bmatrix}
A & B \\
C & D
    \end{bmatrix}\in L(\Hspace{X}\oplus\Hspace{U})
\end{equation}
such that
\end{itemize}
\begin{equation}\label{eq:tf}
\mathcal{F}(\zeta)=D+C(I_{\Hspace{X}}-P(\zeta)A)^{-1}P(\zeta)B, \quad \zeta\in\nspace{D}{d},
\end{equation}
where $P(\zeta)=\zeta_1P_{\Hspace{X}_1}+\cdots +\zeta_dP_{\Hspace{X}_d}$ and $P_{\Hspace{Y}}$ denotes the
orthogonal projector onto a subspace $\Hspace{Y}$ of a Hilbert space $\Hspace{X}$.
\end{thm}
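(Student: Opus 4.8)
The plan is to prove the cycle $(1)\Rightarrow(2')\Rightarrow(3)\Rightarrow(1)$, while the equivalence $(2)\Leftrightarrow(2')$ is exactly the equivalence of the positive-kernel formulation \eqref{posker} with its Kolmogorov factorization \eqref{Kolmogorovdecom} recalled just before the theorem: one factors each $\Theta_k(\omega,\zeta)=\theta_k(\omega)^*\theta_k(\zeta)$ and transports holomorphy in $\zeta$ (and anti-holomorphy in $\omega$) from $\Theta_k$ to $\theta_k$. I would organize the substantive content around a ``lurking isometry'' for $(2')\Rightarrow(3)$, a direct substitution for $(3)\Rightarrow(1)$, and a Hahn--Banach separation argument for $(1)\Rightarrow(2')$.

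For $(2')\Rightarrow(3)$, set $\Hspace{X}_k=\Hspace{M}_k$, $\Hspace{X}=\bigoplus_{k=1}^d\Hspace{X}_k$, and assemble $\theta(\zeta)=\operatorname{col}_{1\le k\le d}\theta_k(\zeta)\in L(\Hspace{U},\Hspace{X})$, so that $\sum_k\theta_k(\omega)^*\theta_k(\zeta)=\theta(\omega)^*\theta(\zeta)$ and $\sum_k\overline{\omega}_k\zeta_k\theta_k(\omega)^*\theta_k(\zeta)=(P(\omega)\theta(\omega))^*P(\zeta)\theta(\zeta)$. Rearranging \eqref{eq:Agler-decomp'} gives, for all $\omega,\zeta\in\nspace{D}{d}$,
\[
(P(\omega)\theta(\omega))^*P(\zeta)\theta(\zeta)+I_\Hspace{U}
=\theta(\omega)^*\theta(\zeta)+\mathcal{F}(\omega)^*\mathcal{F}(\zeta),
\]
which is exactly the statement that the map
\[
V\colon\ \begin{bmatrix}P(\zeta)\theta(\zeta)u\\ u\end{bmatrix}\longmapsto
\begin{bmatrix}\theta(\zeta)u\\ \mathcal{F}(\zeta)u\end{bmatrix},\qquad \zeta\in\nspace{D}{d},\ u\in\Hspace{U},
\]
is isometric on the span of its domain vectors inside $\Hspace{X}\oplus\Hspace{U}$. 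Enlarging $\Hspace{X}$ (adding multiplicity to each $\Hspace{M}_k$) so that the defect spaces of domain and range agree, I extend $V$ to a unitary $U=\begin{bmatrix}A&B\\ C&D\end{bmatrix}$ on $\Hspace{X}\oplus\Hspace{U}$. Reading off the two block rows of $U\begin{bmatrix}P(\zeta)\theta(\zeta)u\\ u\end{bmatrix}=\begin{bmatrix}\theta(\zeta)u\\ \mathcal{F}(\zeta)u\end{bmatrix}$ yields $\theta(\zeta)=(I_{\Hspace{X}}-AP(\zeta))^{-1}B$ and $\mathcal{F}(\zeta)=D+CP(\zeta)\theta(\zeta)$; the resolvent identity $P(\zeta)(I_{\Hspace{X}}-AP(\zeta))^{-1}=(I_{\Hspace{X}}-P(\zeta)A)^{-1}P(\zeta)$ then converts this into \eqref{eq:tf}.

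The implication $(3)\Rightarrow(1)$ is a direct computation: for $T=(T_1,\dots,T_d)\in\mathcal{C}^d$ on $\Hspace{K}$ put $\mathbf{P}(T)=\sum_k P_{\Hspace{X}_k}\otimes T_k$, note $\|\mathbf{P}(T)\|=\max_k\|T_k\|<1$ so that $I-(A\otimes I)\mathbf{P}(T)$ is invertible, and verify using $U^*U=I$ that $I-\mathcal{F}(T)^*\mathcal{F}(T)$ equals $\Xi^*\big(I-\mathbf{P}(T)^*\mathbf{P}(T)\big)\Xi$ for an operator $\Xi$ built from the realization, hence is positive; this gives $\|\mathcal{F}(T)\|\le 1$. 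The hard direction is $(1)\Rightarrow(2')$. I would fix a finite set $\Lambda_0=\{\zeta^{(1)},\dots,\zeta^{(N)}\}\subset\nspace{D}{d}$ and, inside the real space of Hermitian $L(\Hspace{U})$-valued kernels on $\Lambda_0$, consider the closed convex cone $\cC_N$ generated by the kernels $(\omega,\zeta)\mapsto(1-\overline{\omega}_k\zeta_k)\Gamma(\omega,\zeta)$, $k=1,\dots,d$, with $\Gamma$ a positive kernel on $\Lambda_0$. The goal is to show $K_{\mathcal{F}}(\omega,\zeta):=I_\Hspace{U}-\mathcal{F}(\omega)^*\mathcal{F}(\zeta)$ lies in $\cC_N$; if it did not, finite-dimensional Hahn--Banach separation would produce a Hermitian block functional $L=[L_{ij}]$ with $\langle K_{\mathcal{F}},L\rangle<0$ but $\langle K,L\rangle\ge 0$ for all $K\in\cC_N$. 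Testing the latter on rank-one positive kernels pins down positivity properties of $L$ from which a GNS-type construction builds a Hilbert space and \emph{commuting} operators $T_k\colon[e_j\otimes u]\mapsto[\zeta^{(j)}_k e_j\otimes u]$ that are simultaneous strict contractions, while $\langle K_{\mathcal{F}},L\rangle<0$ exhibits a vector witnessing $\|\mathcal{F}(T)\|>1$, contradicting $(1)$. A standard exhaustion of $\nspace{D}{d}$ by finite sets, together with a normal-families/weak-$*$ compactness argument yielding uniform bounds on compacta, then globalizes the finitely supported decompositions to positive kernels $\Theta_k$ on $\nspace{D}{d}\times\nspace{D}{d}$ satisfying \eqref{eq:Agler-decomp}, and hence \eqref{eq:Agler-decomp'}.

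The main obstacle is precisely this last direction: extracting from the separating functional a genuine \emph{commuting} $d$-tuple of strict contractions --- guaranteeing commutativity and simultaneous strict contractivity of the $T_k$ (the latter handled by the rescaling $\zeta\mapsto r\zeta$, $r<1$, and a limit $r\to1^-$) --- and then controlling the limit over finite sets so that the resulting kernels inherit the required holomorphy. Everything else is either the Kolmogorov factorization or algebraic bookkeeping around unitarity of $U$.
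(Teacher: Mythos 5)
Your outline is correct and is essentially the approach of the cited source: the paper states Theorem~\ref{thm:Agler} without proof, quoting \cite{Ag}, and your cycle --- Kolmogorov factorization for (2)$\Leftrightarrow$(2$^{\prime}$), the lurking-isometry extension to a unitary colligation for (2$^{\prime}$)$\Rightarrow$(3) (the very device the paper itself uses in proving Theorem~\ref{thm:knese}), the algebraic identity $I-\mathcal{F}(T)^*\mathcal{F}(T)=\Xi^*(I-\mathbf{P}(T)^*\mathbf{P}(T))\Xi$ from unitarity for (3)$\Rightarrow$(1), and Agler's cone/Hahn--Banach separation with a GNS construction of a commuting strictly contractive tuple for (1)$\Rightarrow$(2$^{\prime}$) --- is exactly the classical proof in \cite{Ag} and its operator-valued treatment in \cite{BT}. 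The only thin spots are the standard technical points you already flag (closedness of the cone over finite subsets and the normal-families/weak-$*$ limit that upgrades approximate decompositions to exact holomorphic kernels), which are resolved in the cited literature in just the way you indicate.
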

We notice that the representation \eqref{eq:tf} is a realization of $\mathcal{F}$ as the transfer function of a
conservative $d$-dimensional Givone--Roesser system (see details in \cite{BT}).

In order to formulate the main result of \cite{K-V}, we also need the following definitions. The class
$\mathcal{A}^d$ is the class of $d$-tuples $R=(R_1,\ldots,R_d)$ of commuting strictly accretive operators on a
common Hilbert space, say $\Hspace{K}$, i.e., the operators $R_k$ commute and there exists a real constant $s>0$
such that $R_k+R_k^*\ge sI_\Hspace{K}$, $k=1,\ldots, d$. It is easy to see that the operator Cayley transform,
defined by
\begin{equation}\label{eq:op-Cayley}
R_k=(I_\Hspace{K}-T)^{-1}(I_\Hspace{K}+T),\quad k=1,\ldots,d,
\end{equation}
 maps the class $\mathcal{C}^d$ onto the class
$\mathcal{A}^d$, and its inverse
\begin{equation}\label{eq:op-Cayley-inverse}
T_k=(R-I_\Hspace{K})(R+I_\Hspace{K})^{-1},\quad k=1,\ldots,d,
\end{equation}
 maps $\mathcal{A}^d$ onto $\mathcal{C}^d$. For
a function $f\in\mathcal{P}_d(\Hspace{U})$ and an operator
$d$-tuple $R\in\mathcal{A}^d$ we define $f(R)=\mathcal{F}(T)$,
where $\mathcal{F}=\mathcal{C}(f)\in\mathcal{S}_d(\Hspace{U})$ is
given by \eqref{eq:double-Cayley} and $T\in\mathcal{C}^d$ is
defined by \eqref{eq:op-Cayley-inverse}.

\begin{thm}[\cite{K-V}]\label{thm:bessm-class}
Let $f$ be a holomorphic $L(\Hspace{U})$-valued function on
$\Omega_d$. The following statements are equivalent:
\begin{itemize}
    \item[(0)] $f\in\mathcal{B}_d(\Hspace{U})$.
    \item[(1)] $f$ satisfies the conditions:
    \begin{itemize}
        \item[(1a)] $f(\lambda z)=\lambda f(z),\
        \lambda\in\mathbb{C}\backslash\{ 0\},\
        z\in\Omega_d$.
        \item[(1b)] $f(R)+f(R)^*\geq 0,\quad
  R\in\mathcal{A}^d$.
        \item[(1c)] $f(\bar{z})=f(z)^*,\quad z\in\Omega_d$.
    \end{itemize}
   \item[(2)] There exist positive
   kernels $\Phi_k(w,z)$ on
   $\Omega_d$, $k=1, \dots, d$, holomorphic in $z$ and
   anti-holomorphic in $w$, that satisfy
\begin{equation}\label{eq:ker-homogen}
\Phi_k(\lambda w,\lambda z)=\Phi_k(w,z), \quad w,z\in\Omega_d,\
\lambda\in\mathbb{C}\setminus\{0\},
\end{equation}
   $k=1,\ldots,d$, such that
\begin{equation}\label{eq:Bessm-decomp}
f(z)=\sum_{k=1}^dz_k\Phi_k(w,z),\quad w,z\in\Omega_d.
\end{equation}
    \item[(2$^{\prime}$)] There exist Hilbert spaces $\Hspace{M}_k$ and holomorphic
    $L(\Hspace{U},\Hspace{M}_k)$-valued functions $\phi_k$ on $\Omega_d$ that satisfy
\begin{equation}\label{eq:factor-homogen}
\phi_k(\lambda z)=\phi_k(z), \quad z\in\Omega_d,\ \lambda\in\mathbb{C}\setminus\{0\},
\end{equation}
     $k=1,\ldots,d$, such that
\begin{equation}\label{eq:Bessm-decomp'}
f(z)=\sum_{k=1}^dz_k\phi_k(w)^*\phi_k(z),\quad w,z\in\Omega_d.
\end{equation}
   \item[(3)] There exist Hilbert spaces
        $\mathcal{X},\mathcal{X}_1,\ldots,\mathcal{X}_d$ with $\mathcal{X}=\bigoplus_{k=1}^d\mathcal{X}_k$, and a
       representation \eqref{eq:tf}
    of a double Cayley transform  of $f$, $\mathcal{F}=\mathcal{C}(f)$
    (which is defined by \eqref{eq:double-Cayley}),
     such that the operator $U$ in \eqref{eq:u} is not only unitary, but also selfadjoint: $U^{-1}=U^*=U$.
     \end{itemize}
If $\iota=\iota_\Hspace{U}$ is an anti-unitary involution on
$\Hspace{U}$ and $(0)$ is replaced by the condition $(0\iota)$
$f\in\iota\mathbb{R}\mathcal{B}_d(\Hspace{U})$, then one should
add to $(1)$ the condition $(1\iota)$ $f$ is a
$\iota_\Hspace{U}$-real function; add to $(2)$ the condition
$(2\iota)$ $\Phi_k$ are $\iota_\Hspace{U}$-real functions,
$k=1,\ldots,d$; add to $(2^\prime)$ the condition
$(2^\prime\iota)$ $\phi_k$ are
$(\iota_\Hspace{U},\iota_{\Hspace{M}_k})$-real functions for some
anti-unitary involutions $\iota_{\Hspace{M}_k}$, $k=1,\ldots,d$;
and add to $(3)$ the condition $(3\iota)$ $U$ is
$\iota_{\Hspace{X}}\oplus\iota_{\Hspace{U}}$-real for some
anti-unitary involution $\iota_{\Hspace{X}}$ which commutes with
$P_{\Hspace{X}_k}$ for all $k=1,\ldots,d$. Then the modified
conditions $(0)$--$(3)$ are equivalent.
\end{thm}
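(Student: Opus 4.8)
The plan is to establish the equivalences $(0)\Leftrightarrow(1)\Leftrightarrow(2)\Leftrightarrow(2')\Leftrightarrow(3)$ by transporting Agler's characterization (Theorem~\ref{thm:Agler}) through the double Cayley transform $\mathcal{C}$, and then to layer the $\iota$-reality refinement on top of each step. The central observation is that the double Cayley transform sets up a dictionary between objects on the right poly-halfplane $\Pi^d$ (equivalently on $\Omega_d$) and objects on the polydisk $\nspace{D}{d}$: the function $f\in\mathcal{P}_d(\Hspace{U})$ corresponds to $\mathcal{F}=\mathcal{C}(f)\in\mathcal{S}_d(\Hspace{U})$, the accretive tuples $R\in\mathcal{A}^d$ correspond to contractive tuples $T\in\mathcal{C}^d$ via \eqref{eq:op-Cayley}--\eqref{eq:op-Cayley-inverse}, and the halfplane variables $z_k=\frac{1+\zeta_k}{1-\zeta_k}$ correspond to polydisk variables $\zeta_k$. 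Under this dictionary the homogeneity condition (1a) should single out exactly those $\mathcal{F}$ whose realization operator $U$ is selfadjoint, which is precisely the content of statement~(3).

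\textbf{Key steps.} First I would prove $(0)\Leftrightarrow(3)$ directly. Given $f\in\mathcal{B}_d(\Hspace{U})$ with long resolvent representation \eqref{eq:long-res}--\eqref{eq:lin-pencil} having $A_0=0$ and $A_k\ge0$, I would compute $\mathcal{F}=\mathcal{C}(f)$ explicitly and manufacture a selfadjoint unitary colligation $U$; the factorization $A_k=(\text{something})^*(\text{something})$ coming from positive semidefiniteness is what produces selfadjointness of $U$. Conversely, from a selfadjoint unitary $U$ in \eqref{eq:tf} I would reverse the Cayley transform and read off positive semidefinite pencil coefficients, recovering a Bessmertny\u{\i} representation. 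Second, for $(1)\Leftrightarrow(0)$, I would verify that (1a)--(1c) are exactly the conditions that $f$ lies in $\mathcal{P}_d(\Hspace{U})$ \emph{and} is homogeneous of degree one; condition (1b) translates under $\mathcal{C}$ into $\mathcal{F}\in\mathcal{SA}_d(\Hspace{U})$ (the Agler-norm bound $\|\mathcal{F}(T)\|\le1$), while (1a) forces the selfadjoint structure. Third, for $(1)\Leftrightarrow(2)\Leftrightarrow(2')$, I would apply Theorem~\ref{thm:Agler} to $\mathcal{F}=\mathcal{C}(f)$ to obtain the Agler decomposition \eqref{eq:Agler-decomp}, then pull the kernels $\Theta_k$ back through the substitution $\zeta_k\mapsto\frac{z_k-1}{z_k+1}$ to produce the homogeneous kernels $\Phi_k$ on $\Omega_d$ satisfying \eqref{eq:ker-homogen}--\eqref{eq:Bessm-decomp}. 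The factor $(1-\overline{\omega}_k\zeta_k)$ in \eqref{eq:Agler-decomp} should transform, after clearing denominators and using homogeneity, into the factor $z_k$ appearing in \eqref{eq:Bessm-decomp}; the Kolmogorov factorization \eqref{Kolmogorovdecom} gives $(2)\Leftrightarrow(2')$ routinely.

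\textbf{The $\iota$-real refinement.} For the real case I would track an anti-unitary involution through every construction. The guiding principle is that $f$ is $\iota_\Hspace{U}$-real (i.e., $f(\bar z)=\iota_\Hspace{U}f(z)\iota_\Hspace{U}$, which here reduces to $f^\sharp=f$) if and only if $\mathcal{F}=\mathcal{C}(f)$ is $\iota_\Hspace{U}$-real as a Schur--Agler function, because the Cayley transform formula \eqref{eq:double-Cayley} commutes with conjugation by $\iota_\Hspace{U}$. I would then show that the Agler kernels and the colligation can be chosen compatibly: averaging a kernel $\Theta_k$ with its $\iota$-conjugate $\iota_\Hspace{U}\overline{\Theta_k}\iota_\Hspace{U}$ yields an $\iota_\Hspace{U}$-real kernel, and a standard symmetrization of the lurking-isometry construction produces an $\iota_\Hspace{X}\oplus\iota_\Hspace{U}$-real colligation $U$ whose involution $\iota_\Hspace{X}$ respects the Givone--Roesser grading $\Hspace{X}=\bigoplus_k\Hspace{X}_k$, i.e.\ commutes with each $P_{\Hspace{X}_k}$.

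\textbf{Main obstacle.} I expect the hardest step to be the precise correspondence between selfadjointness of the colligation $U$ in $(3)$ and the positive-semidefinite pencil representation in $(0)$, carried out at the level of the Cayley-transform substitution. The difficulty is twofold: one must show the substitution $\zeta_k=\frac{z_k-1}{z_k+1}$ sends the conservative Givone--Roesser transfer function \eqref{eq:tf} to a genuine \emph{homogeneous} long resolvent form with $A_0=0$, and one must verify that selfadjointness of $U$ is not merely sufficient but necessary---that homogeneity (1a) truly rules out any non-selfadjoint conservative realization. Controlling the grading-compatibility of $\iota_\Hspace{X}$ in the real case, so that the involution descends correctly to each summand $\Hspace{X}_k$ while preserving both unitarity and selfadjointness of $U$, is where the bookkeeping will be most delicate.
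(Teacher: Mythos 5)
First, a point of reference: the paper does not actually prove Theorem \ref{thm:bessm-class} --- it is quoted from \cite{K-V} --- so your proposal can only be measured against that argument as it is echoed in the proofs of Theorems \ref{thm:b} and \ref{thm:rb} (which invoke Theorems 2.7, 3.1 and 4.2 of \cite{K-V}). Your overall dictionary (double Cayley transform, pullback of Agler decompositions, lurking isometry, symmetrization for the $\iota$-real refinement) is the right one, but two steps as you describe them would fail. The more serious is your formulation of the necessity of (3): you propose to show that homogeneity ``rules out any non-selfadjoint conservative realization.'' That statement is false --- conservative Givone--Roesser realizations of a given $\mathcal{F}$ are wildly non-unique (one can always enlarge the state space, or conjugate by a unitary that destroys selfadjointness), and (3) asserts only the \emph{existence} of a selfadjoint $U$. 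The actual mechanism, which your outline never produces, is a \emph{second} Agler-type decomposition: homogeneity (1a) together with (1c) yields, in addition to the sum-identity $f(w)^*+f(z)=\sum_{k=1}^d(\overline{w}_k+z_k)\phi_k(w)^*\phi_k(z)$, the companion identity with minuses (the pair \eqref{eq:pm-Bessm-decomp'}, equivalently \eqref{eq:2nd_Agler-decomp'} on the disk side), and it is feeding \emph{both} identities into the lurking-isometry construction --- so that the isometric graph is symmetric under interchange of its two sides --- that forces the constructed unitary to satisfy $U=U^*$. Without the minus-decomposition there is nothing in the construction to make $U$ selfadjoint, and your hope that ``the factorization $A_k=(\cdot)^*(\cdot)$ produces selfadjointness'' does not by itself supply it.

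The same omission undercuts your route $(1)\Rightarrow(2)$. Pulling back \eqref{eq:Agler-decomp} under $\zeta_k=(z_k-1)/(z_k+1)$ and clearing denominators, one finds $1-\overline{\omega}_k\zeta_k = 2(\overline{w}_k+z_k)\big/\big((\overline{w}_k+1)(z_k+1)\big)$, so what comes out is the sum-decomposition $f(w)^*+f(z)=\sum_{k=1}^d(\overline{w}_k+z_k)\Phi_k(w,z)$, \emph{not} the $w$-free identity \eqref{eq:Bessm-decomp}; ``clearing denominators'' produces the factor $\overline{w}_k+z_k$, never $z_k$ alone. To reach \eqref{eq:Bessm-decomp} one must first arrange the kernels to be jointly homogeneous as in \eqref{eq:ker-homogen} --- in \cite{K-V} this is done by exploiting rotations $\lambda\in\mathbb{T}$, and it is precisely why the theorem is set on the rotation-invariant domain $\Omega_d=\bigcup_{\lambda\in\mathbb{T}}(\lambda\Pi)^d$ rather than on $\Pi^d$, a point your proposal never engages --- and then substitute $(w,z)\mapsto(\lambda w,\lambda z)$ with $|\lambda|=1$ and compare the coefficients of $\lambda$ and $\overline{\lambda}$: this splits the sum-identity into $f(z)=\sum_k z_k\Phi_k(w,z)$ and $f(w)^*=\sum_k\overline{w}_k\Phi_k(w,z)$, and subtracting gives exactly the minus-decomposition needed above. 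With these two repairs --- deriving the $\pm$ pair from homogeneity and running a two-sided lurking isometry --- your outline aligns with the proof in \cite{K-V}; the $\iota$-real layer you sketch (kernel symmetrization, reality of the constructed colligation) is consistent with how Sections \ref{sec:B_d}--\ref{sec:RB_d} handle the rational case and is the least problematic part of the plan.
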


\begin{rem}  The conventions concerning the definition of a positive
    kernel are actually different in \cite{K-V} from those used here.   Namely, in place of
    the equivalent conditions \eqref{posker} or
    \eqref{Kolmogorovdecom}, the following alternative equivalent
    conditions are used \cite{K-V}:
    \begin{align}
& \sum_{i,j=1}^{N} \langle \Theta(\lambda_{i}, \lambda_{j}) u_{i}, u_{j} \rangle_{\cU} \ge 0
\text{ for all } \lambda_{1}, \dots, \lambda_{N}
\in \Lambda,\ u_{1}, \dots, u_{N} \in \cU,  \label{posker'} \\
&  \Theta(\omega, \zeta) = \theta(\zeta)^{*} \theta(\omega) \text{
for all }
    \omega, \zeta \in \Lambda.
    \label{Kolmogorovdecom'}
\end{align}
That the condition \eqref{posker} or \eqref{Kolmogorovdecom}   is
not equivalent to \eqref{posker'} or \eqref{Kolmogorovdecom'}   in
general (for the matrix-valued case) can be seen as a consequence
of the fact that the matrix transposition map $A \mapsto A^\top$
is not completely positive \cite[Page 144]{Arv}.  However the
analysis in \cite{K-V} was based on the work in \cite{BT} which
used the convention \eqref{posker} or \eqref{Kolmogorovdecom}
rather than \eqref{posker'} or \eqref{Kolmogorovdecom'}.  The
resulting confusion can all be fixed by rearranging the formulas
to conform to consistent conventions.
\end{rem}

Denote by $\mathcal{B}_d^{\rm rat}(\nspace{C}{n})$ (resp., by $\mathbb{R}\mathcal{B}_d^{\rm rat}(\nspace{C}{n})$)
the subclass of $\mathcal{B}_d(\nspace{C}{n})$ (resp., of $\iota\mathbb{R}\mathcal{B}_d^{\rm rat}(\nspace{C}{n})$
with $\iota$ being the complex conjugation operator on $\nspace{C}{n}$) consisting of rational functions. It is
obvious that
\begin{equation}\label{eq:inclusions}
\mathcal{B}_d^{n\times n}\subseteq\mathcal{B}_d^{\rm rat}(\nspace{C}{n}),\quad \mathbb{R}\mathcal{B}_d^{n\times
n}\subseteq\mathbb{R}\mathcal{B}_d^{\rm rat}(\nspace{C}{n}).
\end{equation}

In the present paper, we prove that the inclusions in \eqref{eq:inclusions} are, in fact, equalities.
Moreover, we obtain stronger versions of Theorem \ref{thm:bessm-class} for the classes
$\mathcal{B}_d^{n\times n}=\mathcal{B}_d^{\rm rat}(\nspace{C}{n})$ and $\mathbb{R}\mathcal{B}_d^{n\times
n}=\mathbb{R}\mathcal{B}_d^{\rm rat}(\nspace{C}{n})$ in Sections \ref{sec:B_d} and \ref{sec:RB_d}, respectively. This resolves an
open problem raised in the first two paragraphs on page 257 of \cite{K-V} and
a question in \cite[Problem 2, page 287]{K-V}.

We will say that the  $L({\mathcal U})$-valued function $f$ on $\mathbb{D}^d$ (on $\Pi^d$) is a {\em Cayley inner function} if
$f$ is holomorphic with positive semidefinite real part there and
such that its strong nontangental boundary values $f(t)$ have zero real part:
\begin{equation}   \label{inner1}
  f(t) + f(t)^{*} = 0 \text{\ for\ a.e.\ } t\in \mathbb{T}^d\ (\text{for\ a.e.\ } t \in (i \mathbb{R})^{d}).
\end{equation}
Note that such a function is just the Cayley transform (the double Cayley transform) of an
inner function on the polydisk.\footnote{In the
single-variable case ($d=1$), this is consistent with the terminology
of Rosenblum and Rovnyak \cite{RR}; the parallel engineering
terminology would be (continuous-time, impedance) lossless (see
\cite{Staffans}).}
When $f$ is rational matrix-valued and hence has meromorphic
continuation to ${\mathbb C}^{d}$,  uniqueness of meromorphic
continuation off of $(i{\mathbb R})^{d}$ implies that the condition
\eqref{inner1} can be replaced by
\begin{equation}   \label{inner2}
    f(z) = - f(-\overline{z})^{*} \text{ at all points of analyticity
    of } f.
\end{equation}
Notice that the functions from the class $\mathcal{P}_d(\mathcal{U})$ (and therefore from
 any of the classes $\mathcal{B}_d(\mathcal{U})$, $\mathcal{P}_d^{n\times n}$, $\mathcal{B}_d^{n\times n}$)
  are necessarily Cayley inner on $\Pi^d$.

In Section \ref{S:ratinSA}, we obtain a stronger version of Agler's Theorem \ref{thm:Agler}
 for the class $\mathcal{ISA}^{\rm rat}_d(\mathbb{C}^n)$ of rational inner functions from
 $\mathcal{SA}_d(\mathbb{C}^n)$ as a straightforward extension of Knese's result from \cite{Kn}
 to the matrix-valued case. This result is used then in all subsequent sections. We already
 mentioned characterizations of complex and real rational Bessmertny\u{\i}'s classes that we obtain
 in Sections \ref{sec:B_d} and \ref{sec:RB_d}. In Section \ref{S:CayinD}, we obtain several
  characterizations of the subclass $\mathcal{CIHA}^{\rm rat}(\mathbb{D}^d,\mathbb{C}^n)$ of
  rational Cayley inner functions from the Herglotz--Agler class $\mathcal{HA}(\nspace{D}{d},\nspace{C}{n})$.
   (We recall here that the \emph{Herglotz--Agler class on $\nspace{D}{d}$}, denoted as
   $\mathcal{HA}(\nspace{D}{d},\mathcal{U})$, consists of $L(\mathcal{U})$-valued functions
   which are holomorphic on $\nspace{D}{d}$ and whose values on any commutative $d$-tuple of
   strict contractions on a Hilbert space have positive semidefinite real part.) These results are a
   stronger version of the results in \cite{Ag} where the general Herglotz--Agler class
    $\mathcal{HA}(\nspace{D}{d},\mathcal{U})$ was introduced and characterized.

The \emph{Herglotz--Agler class on $\Pi^d$}, denoted as $\mathcal{HA}(\Pi^d,\mathcal{U})$,
 consists of $L(\mathcal{U})$-valued functions which are holomorphic on $\Pi^d$ and whose
 values on any commutative $d$-tuple of strictly accretive operators on a Hilbert space have
  positive semidefinite real part. We also introduce the subclass $\mathcal{CIHA}(\Pi^d,\mathcal{U})$
  of $\mathcal{HA}(\Pi^d,\mathcal{U})$ that consists of Cayley inner functions. Then it follows from
  Theorem \ref{thm:bessm-class} and a remark two paragraphs above that the class $\mathcal{B}_d(\mathcal{U})$
   is a subclass of functions from $\mathcal{CIHA}(\Pi^d,\mathcal{U})$ satisfying the additional homogeneity
   condition \eqref{eq:homog}, and that $\mathcal{B}_d^{n\times n}=\mathcal{B}_d(\mathbb{C}^n)$ is a subclass
   of rational functions from $\mathcal{CIHA}(\Pi^d,\nspace{C}{n})$ satisfying \eqref{eq:homog}. In Section
   \ref{S:CayinPi}, we obtain several characterizations of the subclass $\mathcal{CIHA}^{\rm rat}(\Pi^d,\nspace{C}{n})$
   of the class $\mathcal{CIHA}(\Pi^d,\nspace{C}{n})$ that consists of rational functions. In particular, we
   extend Bessmertny\u{\i}'s long resolvent representation \eqref{eq:long-res} to
   functions from $\mathcal{CIHA}^{\rm rat}(\Pi^d,\nspace{C}{n})$ where the linear
   pencil \eqref{eq:lin-pencil} has a skew-Hermitian matrix $A_0$ and, as in the case of
    functions from $\mathcal{B}_d^{n\times n}$, the other coefficients $A_k$ are
    positive semidefinite matrices.

We remark that various characterizations of the general Herglotz--Agler
 classes $\mathcal{HA}(\nspace{D}{d},\mathcal{U})$ and $\mathcal{HA}(\Pi^d,\mathcal{U})$
  appear in our paper \cite{BK-V} (we also mention a related recent paper \cite{ADY1}).

\section{The rational inner Schur--Agler class} \label{S:ratinSA}

In this section, we tailor Theorem \ref{thm:Agler} to the case
where $\mathcal{F}$ is finite matrix-valued (so ${\mathcal U} =
{\mathbb C}^{n}$ for some $n \in {\mathbb Z}$) and $\mathcal{F}$
is rational inner, i.e., each matrix entry $\mathcal{F}_{ij}$ of
$\mathcal{F}$ is a rational function and $\mathcal{F}(z)$ is
unitary at each point of analyticity $\omega$ of $\mathcal{F}$ on
the unit torus ${\mathbb T}^{d}$.  We mention that a consequence
of Lemma 6.3 in \cite{BSV} is that the set of singularities of
$\mathcal{F}$ on ${\mathbb T}^{d}$ has ${\mathbb T}^{d}$-Lebesgue
measure zero.  By uniqueness of  analytic continuation (see
\cite[page 21]{Shabat}), we see that the rational matrix function
is inner if and only if the identity
\begin{equation}  \label{inner}
    \mathcal{F}(1/\bar{\omega})^{*} \mathcal{F}(\omega) = I_{n}
\end{equation}
at each nonzero nonsingular point $z$ of $\mathcal{F}$ where $\det \mathcal{F}(z) \ne 0$
(where we set $1/\bar{\omega} = (1/\bar{\omega}_{1}, \dots,
1/\bar{\omega_{d}})$ if
$\omega = (\omega_{1}, \dots, \omega_{d}) \in {\mathbb C}^{d}$).
The following result characterizes the rational inner matrix-valued
Schur--Agler class $\mathcal{ISA}_d^{\rm rat}(\nspace{C}{n})$.  We remark that the single-variable case ($d=1$)
is well known and has origins in the circuit theory literature
(see \cite{AV}), while the bivariate case (where the Schur--Agler class
coincides with the Schur class) seems to have appeared for the first
time in the work of Kummert \cite{Kum} (see \cite{B, Kn, Kn-tri} for
additional discussion),
and the scalar-valued case ($n=1$) for an arbitrary number $d$ of
variables appears in \cite{Kn}.

\begin{thm}  \label{thm:ratinSA} Let $\mathcal{F}$ be a ${\mathbb C}^{n \times
    n}$-valued
    function  of $d$ complex variables.
  The following statements are equivalent:
    \begin{enumerate}
    \item[(1)] $\mathcal{F}\in\mathcal{ISA}_{d}^{\rm rat}({\mathbb C}^{n})$.
        \item[(2$^{\prime}$)] There exist rational ${\mathbb
    C}^{N_{k} \times n}$-valued functions $\theta_{k}$, with some
    $N_{k} \in {\mathbb N}$, $k=1, \dots, d$, which have no singularities on ${\mathbb D}^{d}$ and satisfy
    \eqref{eq:Agler-decomp'}.
        \item[(3)] $\mathcal{F}$ has a finite-dimensional Givone--Roesser unitary
    realization, i.e., there exist $m,m_1,\ldots,m_d\in\mathbb{Z}_+$, with $m=m_1+\cdots+m_d$,
    and a unitary matrix
\begin{equation*}
U=\begin{bmatrix} A & B\\
C & D
\end{bmatrix}\in\mat{C}{(m+n)},
\end{equation*}
where
$$A=[A_{ij}]_{i,j=1,\ldots,d},\quad B=\col_{i=1,\ldots,d}[B_i],\quad C=\row_{j=1,\ldots,d}[C_j]$$
are block matrices with blocks $A_{ij}\in\rmat{\mathbb{C}}{m_i}{m_j}$, $B_i\in\rmat{\mathbb{C}}{m_i}{n}$, and
$C_j\in\rmat{\mathbb{C}}{n}{m_j}$, such that $\mathcal{F}$ has a representation of the form
\begin{equation}\label{eq:tf-fin}
\mathcal{F}(\zeta)=D+C(I_m-P(\zeta)A)^{-1}P(\zeta)B.
\end{equation}
 Here $P(\zeta)=\diag[\zeta_1I_{m_1},\ldots,\zeta_dI_{m_d}]$.
    \end{enumerate}
    \end{thm}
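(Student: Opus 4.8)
The plan is to establish the cycle of implications $(1)\Rightarrow(2')\Rightarrow(3)\Rightarrow(1)$. The two steps $(2')\Rightarrow(3)$ and $(3)\Rightarrow(1)$ are the soft, state-space part and amount to the standard lurking-isometry and transfer-function computations, now carried out in finite dimensions; the step $(1)\Rightarrow(2')$ carries all the analytic content and is where the matrix-valued extension of Knese's theorem \cite{Kn} is needed. I expect the last implication to be the main obstacle.

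For $(2')\Rightarrow(3)$ I would run the lurking-isometry argument. Writing $\theta(\zeta)=\col_{k=1,\ldots,d}[\theta_k(\zeta)]$ with values in $\cX=\bigoplus_{k=1}^d\nspace{C}{N_k}$ and $P(\zeta)=\diag[\zeta_1I_{N_1},\ldots,\zeta_dI_{N_d}]$, the decomposition \eqref{eq:Agler-decomp'} rearranges to the Gram identity
\[
\theta(\omega)^*\theta(\zeta)+\mathcal{F}(\omega)^*\mathcal{F}(\zeta)=\bigl(P(\omega)\theta(\omega)\bigr)^*\bigl(P(\zeta)\theta(\zeta)\bigr)+I_n ,
\]
which exhibits a well-defined isometry $V$ from $\spn\{(P(\zeta)\theta(\zeta)x;\,x)\}$ onto $\spn\{(\theta(\zeta)x;\,\mathcal{F}(\zeta)x)\}$ inside $\cX\oplus\nspace{C}{n}$. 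Since $\cX$ is finite dimensional, $V$ extends to a unitary $U=\begin{bmatrix}A&B\\C&D\end{bmatrix}$ on $\cX\oplus\nspace{C}{n}$ (the extension is unconstrained on the finite-dimensional orthogonal complements and automatically respects the fixed block decomposition $\cX=\bigoplus_k\cX_k$). Reading off the two block rows of $V(P(\zeta)\theta(\zeta)x;\,x)=(\theta(\zeta)x;\,\mathcal{F}(\zeta)x)$ gives $AP(\zeta)\theta(\zeta)+B=\theta(\zeta)$ and $CP(\zeta)\theta(\zeta)+D=\mathcal{F}(\zeta)$, whence $\theta(\zeta)=(I-AP(\zeta))^{-1}B$ and, using $P(\zeta)(I-AP(\zeta))^{-1}=(I_m-P(\zeta)A)^{-1}P(\zeta)$, we recover exactly \eqref{eq:tf-fin}. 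This is $(3)$.

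For $(3)\Rightarrow(1)$ I would run the same identity backwards: unitarity of $U$ yields \eqref{eq:Agler-decomp'} with $\theta_k(\zeta)=P_{\cX_k}(I_m-P(\zeta)A)^{-1}B$, so $\mathcal{F}\in\mathcal{SA}_d(\nspace{C}{n})$ by Theorem \ref{thm:Agler}; $\mathcal{F}$ is rational because $m<\infty$; and it is inner because for $\zeta\in\nspace{T}{d}$ the operator $P(\zeta)$ is unitary, so at every point where $\det(I_m-P(\zeta)A)\neq0$ — which, by Lemma 6.3 of \cite{BSV}, is all of $\nspace{T}{d}$ off a set of measure zero — the standard feedback computation shows $\mathcal{F}(\zeta)$ is unitary, i.e.\ \eqref{inner} holds.

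The hard part is $(1)\Rightarrow(2')$, and I would reduce it to a polynomial Agler decomposition. Write $\mathcal{F}=\tfrac1p\,Q$ with $p$ a scalar polynomial having no zeros in $\nspace{D}{d}$ and $Q$ a matrix polynomial (minimal common denominator). Innerness \eqref{inner} gives $Q(\zeta)^*Q(\zeta)=|p(\zeta)|^2 I_n$ on $\nspace{T}{d}$, so the hereditary matrix polynomial $R(\omega,\zeta):=\overline{p(\omega)}\,p(\zeta)I_n-Q(\omega)^*Q(\zeta)=\overline{p(\omega)}\,p(\zeta)\bigl(I_n-\mathcal{F}(\omega)^*\mathcal{F}(\zeta)\bigr)$ is a positive kernel on $\nspace{D}{d}$ (a rank-one scalar kernel times the Schur--Agler kernel) whose diagonal vanishes on $\nspace{T}{d}$. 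The crux is to produce a decomposition $R(\omega,\zeta)=\sum_{k=1}^d(1-\overline{\omega}_k\zeta_k)S_k(\omega,\zeta)$ with $S_k$ matrix-polynomial positive kernels of bounded degree; taking Kolmogorov factorizations $S_k(\omega,\zeta)=s_k(\omega)^*s_k(\zeta)$ in the sense of \eqref{posker}--\eqref{Kolmogorovdecom} and setting $\theta_k=s_k/p$ then yields rational $\nspace{C}{N_k\times n}$-valued functions, nonsingular on $\nspace{D}{d}$ (where $p\neq0$) with finite $N_k$ (bounded degree forces a finite-dimensional factorization), satisfying \eqref{eq:Agler-decomp'}. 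The existence of such a bounded-degree polynomial decomposition is precisely Knese's theorem \cite{Kn} in the scalar case; I would obtain the matrix case by carrying his construction through with matrix coefficients. The genuinely new point — and the main obstacle — is twofold: first, the possible vanishing of $p$ on $\nspace{T}{d}$, so that the $S_k$ need not extend continuously to the boundary and must be produced as honest polynomials of controlled degree rather than as boundary-measurable objects (this is exactly what Knese's analysis of the zero structure of $p$ resolves, and it must be reproduced verbatim here); and second, because the transposition map is not completely positive (cf.\ the Remark), one must fix the positive-kernel convention \eqref{posker}--\eqref{Kolmogorovdecom} throughout and track adjoints consistently, as the naive matrix analogue of the scalar factorization would otherwise be ill-posed.
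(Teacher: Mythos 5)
Your overall architecture --- the cycle (1)$\Rightarrow$(2$^{\prime}$)$\Rightarrow$(3)$\Rightarrow$(1), with a lurking-isometry argument for (2$^{\prime}$)$\Rightarrow$(3) and the feedback/unitarity computation for (3)$\Rightarrow$(1) --- is exactly the paper's route: the paper delegates (3)$\Rightarrow$(1) to the bivariate argument of \cite[Theorem 6.1]{BSV} and proves the remaining implications via its Theorem \ref{thm:knese}, the matrix-valued version of Knese's Theorem 2.9. Your (2$^{\prime}$)$\Rightarrow$(3) matches the paper's argument and is correct. In (3)$\Rightarrow$(1) there is a small slip: the decomposition \eqref{eq:Agler-decomp'} holds with $\theta_k(\zeta)=P_k(I_m-AP(\zeta))^{-1}B$, not $P_k(I_m-P(\zeta)A)^{-1}B$ as you wrote, since the state vector solving $x=AP(\zeta)x+Bu$ is $x=(I_m-AP(\zeta))^{-1}Bu$; this is a one-line fix and the rest of that half (rationality from $m<\infty$, unitarity of $\mathcal{F}(\zeta)$ at a.e.\ $\zeta\in\mathbb{T}^d$ where $\det(I_m-P(\zeta)A)\ne 0$) is fine.

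The genuine gap is in (1)$\Rightarrow$(2$^{\prime}$). You correctly reduce it to producing a bounded-degree matrix-polynomial decomposition $\overline{p(\omega)}p(\zeta)I_n-Q(\omega)^*Q(\zeta)=\sum_{k=1}^d(1-\overline{\omega}_k\zeta_k)S_k(\omega,\zeta)$, but you then assert its existence ``by carrying Knese's construction through with matrix coefficients,'' and the mechanism you gesture at --- a verbatim reproduction of Knese's analysis of the boundary zero structure of $p$ --- is neither supplied nor actually what is needed. The paper's proof of Theorem \ref{thm:knese} is softer and entirely avoids zero-structure analysis: start from Agler's Theorem \ref{thm:Agler}, which already provides \eqref{eq:Agler-decomp'} with (possibly infinite-dimensional) Hilbert-space-valued holomorphic $\theta_k$; clear denominators to get the identity \eqref{eq:CD-prelim} with $\xi_k=\theta_k p$; restrict to the rays $\zeta=\omega=t\mu$ with $t\in\mathbb{D}$, $\mu\in\mathbb{T}^d$; use $p(\mu)^*p(\mu)=q(\mu)^*q(\mu)$ on all of $\mathbb{T}^d$ (a.e.\ innerness plus continuity of the polynomial expressions --- boundary zeros of $p$ cause no trouble) to see that the left-hand side of \eqref{eq:CD-special} is a polynomial in $t,\bar t$ and a trigonometric polynomial in $\mu$; and then compare zeroth Fourier coefficients in $\mu$ to conclude that $\sum_{k}\sum_\alpha\xi_{k,\alpha}^*\xi_{k,\alpha}|t|^{2|\alpha|}$ is a polynomial of degree at most $r-1$ in $|t|^2$, forcing $\xi_{k,\alpha}=0$ for $|\alpha|>r-1$. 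This degree-bound argument is precisely what makes the $\xi_k$ polynomials; a finite Gram-matrix factorization $X_k=[\xi_{k,\beta}^*\xi_{k,\alpha}]=Y_k^*Y_k$ then yields matrix polynomials $\psi_k$ of size $N_k\times n$ with $N_k\le\binom{r-1+d}{d}n$, and $\theta_k=\psi_k p^{-1}$ gives (2$^{\prime}$). Without this step the crux of the theorem is assumed rather than proved. (Your point about fixing the convention \eqref{posker}--\eqref{Kolmogorovdecom} throughout is well taken, but it is the easy part: it only dictates the order of adjoints in the Gram factorization.)
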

\begin{rem}\label{rem:no_cond_(2)}
We note that the analog of condition (2) in Theorem \ref{thm:Agler}
where the $n\times n$ matrix-valued kernels $\Theta_k(w,\zeta)$ are
assumed to be rational in
$\overline{\omega}=(\overline{\omega}_1,\ldots,\overline{\omega}_d)$
and $\zeta=(\zeta_1,\ldots,\zeta_d)$, does not guarantee that
${\mathcal F}$ is inner, due to fact that $\Theta_{k}$ may fail to
have a Kolmogorov decomposition $\Theta_{k}(\omega, \zeta) =
\theta_{k}(\omega)^{*} \theta_{k}(\zeta)$ as in
condition (2$^{\prime}$) in Theorem \ref{thm:ratinSA} with $\theta_{k}$ rational matrix-valued.
 E.g.,
if $d=1$ and $\mathcal{F}=0$, then the Szeg\H{o} kernel
$\Theta_{\rm Sz}(\omega,\zeta)=\frac{1}{1-\overline{\omega}\zeta}$
has no rational finite matrix-valued Kolmogorov decomposition, while it is rational
in $\overline{w}$ and $\zeta$ and
satisfies \eqref{eq:Agler-decomp}.
\end{rem}

The proof of (3)$\Rightarrow$(1) in Theorem \ref{thm:ratinSA}
follows in the same was as in the bivariate case appearing in
\cite[Theorem 6.1]{BSV}.  Thus, to complete the proof of Theorem
\ref{thm:ratinSA}, it suffices to show
(1)$\Rightarrow$(2$^{\prime}$)$\Rightarrow$(3).  These
implications follow from the following more detailed version of
the result, which is just the matrix-valued extension of Knese's
Theorem 2.9 in \cite{Kn}.

\begin{thm}\label{thm:knese}
Let polynomials $p,q\in \mat{\mathbb{C}}{n}[\zeta_1,\ldots,\zeta_d]$
be given with $p(\zeta)$ invertible
for all $\zeta\in\nspace{D}{d}$.   Consider the following statements:
\begin{enumerate}
    \item[(1Kn)]   $\mathcal{F} := q p^{-1}\in\mathcal{ISA}_d^{\rm rat}(\nspace{C}{n})$.
    \item[(2$^{\prime}$Kn)] There exist $N_k\in\mathbb{N}$ and
$\psi_k\in\rmat{\mathbb{C}}{N_k}{n}[\zeta_1,\ldots,\zeta_d]$, $k=1,\ldots,d$, such that
\begin{equation}\label{eq:Knese}
p(\omega)^*p(\zeta)-q(\omega)^*q(\zeta)=\sum_{k=1}^d(1-\overline{\omega}_k\zeta_k)\psi_k(\omega)^*\psi_k(\zeta).
\end{equation}
\item[(3Kn)] $\mathcal{F}$ has a finite-dimensional Givone--Roesser unitary realization
as in condition (2) of Theorem \ref{thm:ratinSA}.
\end{enumerate}
Then {\rm (1Kn)$\Rightarrow$(2$^{\prime}$Kn)$\Rightarrow$(3Kn)}.
\end{thm}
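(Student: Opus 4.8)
The plan is to prove the two implications in turn: to obtain (2$^{\prime}$Kn) from (1Kn) by converting the \emph{holomorphic} Agler decomposition supplied by Theorem \ref{thm:Agler} into a \emph{polynomial} one, and then to obtain (3Kn) from (2$^{\prime}$Kn) by the standard lurking-isometry construction. For the first implication I would start from $\mathcal{F}=qp^{-1}\in\mathcal{ISA}_d^{\rm rat}(\mathbb{C}^n)\subseteq\mathcal{SA}_d(\mathbb{C}^n)$ and apply the matrix-valued Agler theorem (Theorem \ref{thm:Agler}, using the Kolmogorov convention \eqref{Kolmogorovdecom} throughout, as the Remark warns) to get holomorphic $\theta_k$ on $\mathbb{D}^d$ satisfying \eqref{eq:Agler-decomp'}. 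Multiplying that identity on the left by $p(\omega)^*$ and on the right by $p(\zeta)$ and setting $\Psi_k:=\theta_k\,p$ gives
\[ p(\omega)^*p(\zeta)-q(\omega)^*q(\zeta)=\sum_{k=1}^d(1-\overline{\omega}_k\zeta_k)\,\Psi_k(\omega)^*\Psi_k(\zeta), \]
an identity whose left-hand side is a genuine (Hermitian) polynomial in $(\overline{\omega},\zeta)$ but whose factors $\Psi_k$ are a priori only holomorphic.

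The heart of the matter — and the step I expect to be the main obstacle — is to replace the $\Psi_k$ by polynomial $\psi_k$ of controlled multidegree, i.e.\ to upgrade this to the polynomial decomposition \eqref{eq:Knese}. Here I would follow Knese's argument. Its engine is the reflection relation forced by innerness: writing $\mathbf{n}=(n_1,\dots,n_d)$ with $n_j=\max(\deg_{\zeta_j}p,\deg_{\zeta_j}q)$, the boundary identity \eqref{inner}, which on $\mathbb{T}^d$ reads $q(t)^*q(t)=p(t)^*p(t)$, continues analytically to a polynomial identity relating $p$, $q$ and their reflections $p^{\flat},q^{\flat}$ of multidegree $\le\mathbf{n}$. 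The finite-dimensionality of the associated model space for a rational inner function then confines the Agler factors to a finite-dimensional space of matrix polynomials of bidegree at most $(\mathbf{n},\mathbf{n})$ in $(\overline{\omega},\zeta)$; comparing top-order terms in the displayed identity and invoking positivity of the kernels $\Psi_k(\omega)^*\Psi_k(\zeta)$ forces the degrees down, so that each $\Psi_k$ is itself polynomial, and a matrix Cholesky/Kolmogorov factorization of its finite positive-semidefinite coefficient array produces the required $\psi_k\in\mathbb{C}^{N_k\times n}[\zeta_1,\dots,\zeta_d]$. The points needing genuine verification in the matrix-valued extension are exactly those where scalars become matrices: the reflected-polynomial relation must be handled for matrix polynomials (so that scalar cancellations are no longer available), and every factorization must respect the convention \eqref{Kolmogorovdecom} rather than \eqref{Kolmogorovdecom'} — precisely the complete-positivity subtlety flagged in the Remark.

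Finally, for (2$^{\prime}$Kn)$\Rightarrow$(3Kn) I would run the lurking-isometry argument, which is insensitive to the passage from scalars to matrices. Stacking $\psi(\zeta)=\col_{k=1,\ldots,d}[\psi_k(\zeta)]\in\mathbb{C}^{N\times n}$ with $N=\sum_k N_k$, and setting $P(\zeta)=\diag[\zeta_1I_{N_1},\dots,\zeta_dI_{N_d}]$, the identity \eqref{eq:Knese} rearranges (using $\psi(\omega)^*(I-P(\omega)^*P(\zeta))\psi(\zeta)=\sum_k(1-\overline{\omega}_k\zeta_k)\psi_k(\omega)^*\psi_k(\zeta)$) to
\[ p(\omega)^*p(\zeta)+\psi(\omega)^*P(\omega)^*P(\zeta)\psi(\zeta)=q(\omega)^*q(\zeta)+\psi(\omega)^*\psi(\zeta). \]
This says that the densely-defined map $\begin{bmatrix}P(\zeta)\psi(\zeta)\\ p(\zeta)\end{bmatrix}\mapsto\begin{bmatrix}\psi(\zeta)\\ q(\zeta)\end{bmatrix}$ is a well-defined isometry between subspaces of $\mathbb{C}^{N}\oplus\mathbb{C}^n$; extending it to a unitary $U=\begin{bmatrix}A&B\\ C&D\end{bmatrix}$ and reading off the two block rows gives $\psi(\zeta)=(I-AP(\zeta))^{-1}Bp(\zeta)$ and $q(\zeta)=\big(CP(\zeta)(I-AP(\zeta))^{-1}B+D\big)p(\zeta)$, whence, using $P(I-AP)^{-1}=(I-PA)^{-1}P$,
\[ \mathcal{F}(\zeta)=q(\zeta)p(\zeta)^{-1}=D+C(I-P(\zeta)A)^{-1}P(\zeta)B, \]
which is the Givone--Roesser realization \eqref{eq:tf-fin}, with $m_k=N_k$ and the block grading inherited from $P(\zeta)$. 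Invertibility of $I-P(\zeta)A$ on $\mathbb{D}^d$ follows from $\|A\|\le1$ and $\|P(\zeta)\|<1$.
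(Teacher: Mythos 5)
Your second implication (the lurking-isometry argument) is correct and essentially identical to the paper's proof of (2$'$Kn)$\Rightarrow$(3Kn), and the frame of your first implication also matches the paper: apply Theorem \ref{thm:Agler}, multiply \eqref{eq:Agler-decomp'} by $p(\omega)^*$ on the left and $p(\zeta)$ on the right to get holomorphic factors $\xi_k=\theta_k p$, and finish by factoring the finite positive-semidefinite Gram array of the coefficients (the paper writes $X_k=[\xi_{k,\beta}^*\xi_{k,\alpha}]=Y_k^*Y_k$, yielding the rank bound $N_k\le\binom{r-1+d}{d}n$). But at the one step where real work is required --- proving that the $\xi_k$, which a priori take values in possibly \emph{infinite-dimensional} Hilbert spaces $\mathcal{M}_k$, are in fact polynomials of bounded degree --- your proposal substitutes a gesture for an argument. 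Invoking ``finite-dimensionality of the associated model space'' is circular: that finite-dimensionality is precisely what is being proved. And ``comparing top-order terms and invoking positivity'' in the two-variable identity does not work as stated, because the unknown summands $(1-\overline{\omega}_k\zeta_k)\xi_k(\omega)^*\xi_k(\zeta)$ can cancel among themselves off the diagonal; positivity of each kernel gives you nothing term-by-term until you restrict to $\omega=\zeta$.

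The paper's device, which is the missing idea, is the restriction to rays through the origin: set $\zeta=\omega=t\mu$ with $t\in\mathbb{D}$, $\mu\in\mathbb{T}^d$. Then the right-hand side becomes the sum of \emph{positive} terms $\sum_k\xi_k(t\mu)^*\xi_k(t\mu)$, while the left-hand side is $\bigl(p(t\mu)^*p(t\mu)-q(t\mu)^*q(t\mu)\bigr)/(1-|t|^2)$, which is a polynomial in $t,\bar t$ because innerness gives $p(\mu)^*p(\mu)=q(\mu)^*q(\mu)$ on $\mathbb{T}^d$, so the numerator vanishes on the variety $1-\bar t t=0$. Extracting the $0$-th Fourier coefficient in $\mu$ yields
\begin{equation*}
\frac{\sum_\alpha(p_\alpha^*p_\alpha-q_\alpha^*q_\alpha)|t|^{2|\alpha|}}{1-|t|^2}
=\sum_{k=1}^d\sum_\alpha\xi_{k,\alpha}^*\xi_{k,\alpha}|t|^{2|\alpha|},
\end{equation*}
whose left side is a polynomial in $|t|^2$ of degree at most $r-1$ ($r$ the maximal total degree of $p,q$); since every term $\xi_{k,\alpha}^*\xi_{k,\alpha}$ on the right is positive semidefinite, all coefficients with $|\alpha|>r-1$ must vanish, forcing each $\xi_k$ to be a polynomial of total degree at most $r-1$ (note: a total-degree bound, not the multidegree bound $(\mathbf{n},\mathbf{n})$ you posited). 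This diagonal-slice-plus-Fourier argument is where positivity genuinely enters, and without it, or an equivalent substitute, your proof of (1Kn)$\Rightarrow$(2$'$Kn) has a gap. Your concern about the Kolmogorov convention \eqref{Kolmogorovdecom} versus \eqref{Kolmogorovdecom'} is well placed but is a bookkeeping matter, not the obstacle.
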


\begin{proof}
(1Kn)$\Rightarrow$(2$^{\prime}$Kn): By Agler's theorem (see
Theorem \ref{thm:Agler}) there exist Hilbert spaces
 $\Hspace{M}_k$ and
$L(\nspace{C}{n},\Hspace{M}_k)$-valued holomorphic functions $\theta_k$ on $\nspace{D}{d}$, $k=1,\ldots,d$, such
that \eqref{eq:Agler-decomp'} holds. Multiplying both sides of \eqref{eq:Agler-decomp'} by $p(\zeta)$ on the
right and by $p(\omega)^*$ on the left, we obtain
\begin{equation}\label{eq:CD-prelim}
p(\omega)^*p(\zeta)-q(\omega)^*q(\zeta)=\sum_{k=1}^d(1-\overline{\omega}_k\zeta_k)\xi_k(\omega)^*\xi_k(\zeta),\quad
\omega,\zeta\in\nspace{D}{d},
\end{equation}
with $L(\nspace{C}{n},\Hspace{M}_k)$-valued holomorphic functions $\xi_k=\theta_kp$ on $\nspace{D}{d}$. Letting
$\zeta=\omega=t\mu$ where $t\in\mathbb{D}$ and $\mu\in\nspace{T}{d}$, we obtain
\begin{equation}\label{eq:CD-special}
\frac{p(t\mu)^*p(t\mu)-q(t\mu)^*q(t\mu)}{1-|t|^2}=\sum_{k=1}^d\xi_k(t\mu)^*\xi_k(t\mu).
\end{equation}
Since $p(\mu)^*p(\mu)=q(\mu)^*q(\mu)$ for all $\mu\in\nspace{T}{d}$, the numerator of the left-hand side of
\eqref{eq:CD-special} is a polynomial in $t$ and $\bar{t}$ which vanishes on the variety $1-\bar{t}t=0$.
Therefore the left-hand side of \eqref{eq:CD-special} is a polynomial in $t$ and $\bar{t}$, and also a
trigonometric polynomial in $\mu$. We have
$$p(\zeta)=\sum_\alpha p_\alpha \zeta^\alpha,\quad q(z)=\sum_\alpha q_\alpha \zeta^\alpha,\quad
\xi_k(\zeta)=\sum_\alpha \xi_{k,\alpha} \zeta^\alpha, $$ where the first two sums are finite. We also have
$$p(t\mu)^*p(t\mu)=\sum_{\alpha,\beta}p_\beta^*p_\alpha\mu^{\alpha-\beta}\bar{t}^{|\beta|}t^{|\alpha|}$$
(here we use notation $|\alpha|=\alpha_1+\cdots+\alpha_d$),  and similarly for $q$ and $\xi_k$. Therefore, the
$0$-th Fourier coefficients of the two sides of the equality \eqref{eq:CD-special} (as Fourier series in $\mu$)
are
\begin{equation*}
\frac{\sum_\alpha(p_\alpha^*p_\alpha-q_\alpha^*q_\alpha)|t|^{2|\alpha|}}{1-|t|^2}=\sum_{k=1}^d\sum_\alpha
\xi_{k,\alpha}^*\xi_{k,\alpha}|t|^{2|\alpha|}.
\end{equation*}
Since the left-hand side is a polynomial in $|t|^2$ of degree at most $r-1$, where $r$ is the maximum of the
total degrees of $p$ and $q$, so is the right-hand side, i.e., $\xi_{k,\alpha}^*\xi_{k,\alpha}=0$ when
$|\alpha|>r-1$, $k=1,\ldots,d$. This implies that $\xi_{k}$ is a $\Hspace{M}_k$-valued polynomial. We have
$$\xi_k(\omega)^*\xi_k(\zeta)=\sum_{|\alpha|,|\beta|\le r-1}\xi_{k,\beta}^*\xi_{k,\alpha}\bar{\omega}^\beta
\zeta^\alpha,$$ where the  positive semidefinite block matrix $X_k=[\xi_{k,\beta}^*\xi_{k,\alpha}]_{\alpha,\beta}$ (of
size ${ \Big(\begin{matrix} r-1+d\\
d\end{matrix}\Big)}$)  can be factored as $X_k=Y_k^*Y_k$ where $Y_k$ is a matrix of size
$N_k\times {\small\left(\begin{matrix} r-1+d\\
d\end{matrix}\right)}n$ with
\begin{equation}\label{eq:N_k-estimate}
N_k=\rank X_k\le\left(\begin{matrix} r-1+d\\
d\end{matrix}\right)n.
\end{equation}
 Writing $Y_k=\row_{|\alpha|\le r-1}[Y_{k,\alpha}]$, we define
$\psi_k\in\rmat{\mathbb{C}}{N_k}{n}[\zeta_1,\ldots,\zeta_d]$ by
$$\psi_k(\zeta)=\sum_{|\alpha|\le r-1}Y_{k,\alpha}\zeta^\alpha.$$ Then
$$\xi_k(\omega)^*\xi_k(\zeta)=\psi_k(\omega)^* \psi_k(\zeta),\quad \zeta,\omega\in\nspace{C}{d},\quad
k=1,\ldots,d,$$ and \eqref{eq:Knese} holds.

(2$^{\prime}$Kn)$\Rightarrow$(3Kn): We use the so-called lurking
isometry argument. Rearranging the terms in \eqref{eq:Knese}, we
obtain
\begin{equation*}
p(\omega)^*p(\zeta)+\sum_{k=1}^d\overline{\omega}_k\zeta_k\psi_k(\omega)^*\psi_k(\zeta)=
q(\omega)^*q(\zeta)+\sum_{k=1}^d\psi_k(\omega)^*\psi_k(\zeta).
\end{equation*}
Therefore the map
$$\begin{bmatrix}
\zeta_1\psi_1(\zeta)\\
\vdots\\
\zeta_d\psi_d(\zeta)\\
p(\zeta)
\end{bmatrix}h\mapsto
\begin{bmatrix}
\psi_1(\zeta)\\
\vdots\\
\psi_d(\zeta)\\
q(\zeta)
\end{bmatrix}h
$$
is a well-defined linear and isometric map from the span of the elements on the left to the span of the elements
on the right, where both spans are taken over all $\zeta\in\nspace{C}{d}$ and $h\in\nspace{C}{n}$. It may be
extended (if necessary) to a unitary matrix $U$ of the required form where we set $m_k=N_k$, $k=1,\ldots,d$, and
$m=m_1+\cdots+m_d$. Writing $\psi(\zeta)=\col_{k=1,\ldots,d}[\psi_k]$, we have by construction of $U$
\begin{eqnarray*}
AP(\zeta)\psi(\zeta)+Bp(\zeta) &=& \psi(\zeta),\\
CP(\zeta)\psi(\zeta)+Dp(\zeta) &=& q(\zeta).
\end{eqnarray*}
Solving for $\psi(\zeta)$ using the first equation and then plugging the result in the second equation gives
$$\mathcal{F}(\zeta)=q(\zeta)p(\zeta)^{-1}=D+CP(\zeta)(I_m-AP(\zeta))^{-1}B=D+C(I_m-P(\zeta)A)^{-1}P(\zeta)B$$
as desired.
\end{proof}

\section{Characterizations of the class $\mathcal{B}_d^{n\times n}$} \label{sec:B_d}

The following refinement of the main result from \cite{K-V}
identifies the rational subclass $\cB_{d}^{\text{rat}}({\mathbb C}^{n})$ of
the generalized Bessmertny\u{\i} class $\cB_{d}({\mathbb C}^{n})$.

\begin{thm}\label{thm:b} Let $f$ be a $\mathbb{C}^{n\times n}$-valued function of $d$ complex variables.
 The following statements are equivalent:
\begin{itemize}
\item[(0)] $f \in \mathcal{B}_{d}^{n \times n}$.
\item[(1)] $f\in\mathcal{B}_d^{\rm rat}(\nspace{C}{n})$.
\item[(2)] There exist $\mathbb{C}^{n\times n}$-valued functions
$\Phi_k(w,z)$, $k=1,\ldots, d$, which are
rational as functions of $z=(z_1,\ldots,z_d)$ and $\overline{w}=(\overline{w}_1,\ldots,\overline{w}_d)$ and which
are  positive  kernels on
   $\Omega_d$ that satisfy \eqref{eq:ker-homogen}
and \eqref{eq:Bessm-decomp}.
    \item[(2$^\prime$)] There exist rational $\rmat{\mathbb{C}}{N_k}{n}$-valued functions $\phi_k$,
    with some $N_k\in\mathbb{N}$, $k=1,\ldots,d$,
     with no singularities on $\Omega_d$, that satisfy \eqref{eq:factor-homogen} and \eqref{eq:Bessm-decomp'}.
   \item[(3)] $\mathcal{F}=\mathcal{C}(f)$ (see \eqref{eq:double-Cayley}) has a finite-dimensional Givone--Roesser
   representation \eqref{eq:tf-fin} as in part (3) of Theorem \ref{thm:ratinSA} where
   the colligation matrix $U$ has the additional property of being
   Hermitian:
   $$
   U^{-1} = U^{*} = U,
   $$
   and $1\notin\sigma(\mathcal{F}(0))$.
     \end{itemize}
\end{thm}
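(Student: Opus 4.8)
The plan is to establish the cycle $(0)\Rightarrow(1)\Rightarrow(2^{\prime})\Rightarrow(2)\Rightarrow(3)\Rightarrow(0)$, using the already-proved (and possibly infinite-dimensional) Theorem~\ref{thm:bessm-class} as the backbone and injecting finite-dimensionality at a single point, through Theorem~\ref{thm:ratinSA}. Two of the links are immediate: $(0)\Rightarrow(1)$ holds because the defining finite pencil of $\mathcal{B}_d^{n\times n}$, with $A_0=0$ and $A_k\ge 0$, already exhibits $f$ as a rational function lying in $\mathcal{B}_d(\nspace{C}{n})$; and $(2^{\prime})\Rightarrow(2)$ holds by setting $\Phi_k(w,z)=\phi_k(w)^{*}\phi_k(z)$, which is manifestly a rational positive kernel and which inherits \eqref{eq:ker-homogen} from \eqref{eq:factor-homogen}.

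For $(1)\Rightarrow(2^{\prime})$ I would form the double Cayley transform $\mathcal{F}=\mathcal{C}(f)$ of \eqref{eq:double-Cayley}. Since $f$ is rational, $\mathcal{F}$ is rational; since $f\in\mathcal{P}_d(\nspace{C}{n})$ is Cayley inner (as remarked after \eqref{inner2}), $\mathcal{F}$ is inner; and $\mathcal{F}\in\mathcal{SA}_d(\nspace{C}{n})$ because $f\in\mathcal{B}_d(\nspace{C}{n})$ by Theorem~\ref{thm:bessm-class}. Thus $\mathcal{F}\in\mathcal{ISA}_d^{\rm rat}(\nspace{C}{n})$, and Theorem~\ref{thm:ratinSA} (equivalently Theorem~\ref{thm:knese}) supplies a \emph{finite} Agler decomposition \eqref{eq:Agler-decomp'} with rational matrix-valued $\theta_k$. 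Transporting this back through the Cayley substitutions $z_k=(1+\zeta_k)/(1-\zeta_k)$ and $\mathcal{F}=(f-I)(f+I)^{-1}$, and invoking the homogeneity \eqref{eq:homog} of $f$ exactly as in the proof of Theorem~\ref{thm:bessm-class} but now with finite data, yields the homogeneous factorization \eqref{eq:Bessm-decomp'} with $\phi_k$ rational, finite matrix-valued, free of singularities on $\Omega_d$, and satisfying \eqref{eq:factor-homogen}. Every operation here is algebraic and rationality-preserving, so finiteness is maintained.

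The homogeneity enters through a symmetry of $\mathcal{F}$ that I would record first. The substitution above turns the involution $\zeta_k\mapsto 1/\zeta_k$ into $z_k\mapsto -z_k$, so \eqref{eq:homog} with $\lambda=-1$ gives $\mathcal{F}(1/\zeta)=\mathcal{F}(\zeta)^{-1}$, while innerness gives, via \eqref{inner}, $\mathcal{F}(1/\bar\zeta)^{*}=\mathcal{F}(\zeta)^{-1}$; combining the two yields
\[
\mathcal{F}(\zeta)=\mathcal{F}(\bar\zeta)^{*}.
\]
A direct computation shows that $\mathcal{F}(\bar\zeta)^{*}$ is realized by the colligation $U^{*}$ with the \emph{same} projection $P(\zeta)$, so $U$ and $U^{*}$ realize the same $\mathcal{F}$. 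This is what I would use for $(2)\Rightarrow(3)$: starting from the Kolmogorov data of $(2)$ (that is, the factorization $(2^{\prime})$) I would run a self-adjoint version of the lurking-isometry argument of Theorem~\ref{thm:knese}, matching the span on the ``$p$-side'' with the adjoint of the span on the ``$q$-side'' by means of $\mathcal{F}(\zeta)=\mathcal{F}(\bar\zeta)^{*}$, so that the unitary extension is forced to be an involution $U=U^{*}=U^{-1}$. The normalization $1\notin\sigma(\mathcal{F}(0))$ is free, since $f=\mathcal{C}^{-1}(\mathcal{F})$ is holomorphic at $(1,\dots,1)\in\Pi^d$, the image of $\zeta=0$, so that $I-\mathcal{F}(0)$ is invertible.

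This $(2)\Rightarrow(3)$ step is the main obstacle. One cannot simply conjugate the finite unitary realization produced by Theorem~\ref{thm:ratinSA} into a self-adjoint one: unitary conjugation preserves the spectrum, whereas a self-adjoint unitary has spectrum in $\{\pm1\}$, so the involutivity must be built into the construction from the outset and may require enlarging the Givone--Roesser state space beyond the minimal one. Granting $(3)$, the last link $(3)\Rightarrow(0)$ is the finite-dimensional specialization of the corresponding passage in Theorem~\ref{thm:bessm-class}: Cayley-transforming the self-adjoint unitary colligation back produces a linear pencil \eqref{eq:lin-pencil} whose Schur complement \eqref{eq:long-res} is $f$, where self-adjointness of $U$ forces the coefficients $A_k$ to be positive semidefinite and homogeneity forces $A_0=0$, placing $f$ in $\mathcal{B}_d^{n\times n}$ and closing the cycle. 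Since every object is built from the finite matrices $\theta_k$ and $\phi_k$ by Cayley transforms and linearizations, finiteness is preserved throughout, and the only genuinely new ingredient beyond Theorem~\ref{thm:bessm-class} is the finite realization of Theorem~\ref{thm:ratinSA} together with the self-adjoint symmetrization just described.
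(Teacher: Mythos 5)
Most of your links track the paper's proof ((2$'$)$\Rightarrow$(2) via $\Phi_k(w,z)=\phi_k(w)^*\phi_k(z)$; (1)$\Rightarrow$(2$'$) by passing to $\mathcal{F}=\mathcal{C}(f)\in\mathcal{ISA}_d^{\rm rat}(\mathbb{C}^n)$, invoking Theorem \ref{thm:ratinSA}, and transporting the rational $\theta_k$ back through \eqref{eq:theta_to_phi}), but your pivotal step (2)$\Rightarrow$(3) has two problems. The first is structural: from (2) alone you cannot take ``the Kolmogorov data of (2) (that is, the factorization (2$'$))'' --- Remark \ref{rem:no_cond_(2)} warns precisely that a positive kernel rational in $\overline{w}$ and $z$ need not admit a rational finite matrix-valued Kolmogorov decomposition (the Szeg\H{o} kernel being the standard example). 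The paper avoids this by proving (2)$\Rightarrow$(1) first (via the implication (2)$\Rightarrow$(0) of Theorem \ref{thm:bessm-class}, with rationality of $f$ read off from \eqref{eq:Bessm-decomp}), after which the already-established (1)$\Rightarrow$(2$'$) becomes available; your cycle is repairable the same way.

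The second problem is substantive and is not repairable as described: your mechanism for forcing $U=U^*$ rests solely on the symmetry $\mathcal{F}(\zeta)=\mathcal{F}(\bar{\zeta})^*$. But that symmetry already follows from the selfadjointness condition \eqref{eq:selfadj} alone (since $f(\bar z)^*=f(z)$ gives $\mathcal{F}(\bar\zeta)^*=(f(z)+I)^{-1}(f(z)-I)=\mathcal{F}(\zeta)$), so in deriving it you have discarded the homogeneity, and it is strictly too weak to force an involutive colligation. Concretely, for $d=1$ the function $\mathcal{F}(\zeta)=\zeta^2$ satisfies $\mathcal{F}(\bar\zeta)^*=\mathcal{F}(\zeta)$ and $1\notin\sigma(\mathcal{F}(0))$, yet its inverse double Cayley transform $f(z)=(z^2+1)/(2z)$ is positive real Cayley inner but not homogeneous, so $f\notin\mathcal{B}_1^{1\times 1}$ and (by the theorem itself, or by Theorem \ref{thm:bessm-class}) $\mathcal{F}$ admits \emph{no} Hermitian unitary realization; hence no ``self-adjoint lurking isometry'' driven by this symmetry can exist. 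What the homogeneity actually buys --- and what the paper uses, following the proof of Theorem 4.2 in \cite{K-V} --- is a \emph{pair} of decompositions: because the right-hand side of \eqref{eq:Bessm-decomp'} is independent of $w$, one obtains both sign choices in \eqref{eq:pm-Bessm-decomp'}, equivalently, after the Cayley transport \eqref{eq:phi_to_theta}, the two identities \eqref{eq:Agler-decomp'} \emph{and} \eqref{eq:2nd_Agler-decomp'} with the \emph{same} rational functions $\theta_k$. It is the simultaneous validity of these two kernel identities for a common family $\theta_k$ --- not the function symmetry --- that makes the graph of the lurking isometry symmetric and forces a Hermitian unitary extension, with finite-dimensionality supplied by the finite-dimensional reproducing kernel spaces $\mathcal{H}(\theta_k(\omega)^*\theta_k(\zeta))$. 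The same ingredient is missing from your closing step (3)$\Rightarrow$(0): homogeneity is not a hypothesis there and must be \emph{derived} from the Hermitian $U$, which the paper does by exhibiting $\theta_k(\zeta)=P_k(I_m-AP(\zeta))^{-1}B$ satisfying both \eqref{eq:Agler-decomp'} and \eqref{eq:2nd_Agler-decomp'}, deducing \eqref{eq:pm-Bessm-decomp'}, and then passing to (0) as in Theorem 2.7 of \cite{K-V}.
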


\begin{proof}[Proof of Theorem \ref{thm:b}]
Our plan is to prove first (2$^\prime$)$\Rightarrow$(2)$\Rightarrow$(1)$\Rightarrow$(2$^\prime$), and then
(2$^{\prime}$)$\Leftrightarrow$(3) and (2$^{\prime}$)$\Leftrightarrow$(0).

(2$^\prime$)$\Rightarrow$(2) is obvious: just define $\Phi_k(w,z)=\phi_k(w)^*\phi_k(z)$, $k=1,\ldots,d$.

(2)$\Rightarrow$(1): It follows from the implication (2)$\Rightarrow$(0) of Theorem \ref{thm:bessm-class}
that $f\in\mathcal{B}_d(\mathbb{C}^n)$. Since the kernels $\Phi_k(w,z)$ are rational matrix-valued functions
in both $\overline{w}$ and $z$, the matrix-valued function $f$ in \eqref{eq:Bessm-decomp} is rational.

(1)$\Rightarrow$(2$^\prime$): If (1) holds, then it follows that $\mathcal{F}=\mathcal{C}(f)$ is a rational
$n\times n$ matrix-valued function in the Schur--Agler class
$\mathcal{SA}_d(\nspace{C}{n})$.  The fact that $f$ is also
Cayley inner then guarantees that $\mathcal{F}$ in addition is inner, i.e.,
$\mathcal{F}\in\mathcal{ISA}_d^{\rm rat}(\nspace{C}{n})$. It follows from implication
(1)$\Rightarrow$(2$^\prime$) of Theorem \ref{thm:ratinSA} that \eqref{eq:Agler-decomp'} holds
with rational $\rmat{\mathbb{C}}{N_k}{n}$-valued functions $\theta_k$, for some
$N_k\in\mathbb{N}$, $k=1,\ldots,d$, which have no singularities on
$\nspace{D}{d}$. Set
\begin{equation}\label{eq:theta_to_phi}
\phi_k(z)=\frac{1}{z_k+1}\theta_k\left(\frac{z_1-1}{z_1+1},\ldots,\frac{z_d-1}{z_d+1}\right)(f(z)+I_n),
\quad k=1,\ldots,d.
\end{equation}
 It is clear that these $\phi_k$ are rational $\rmat{\mathbb{C}}{N_k}{n}$-valued functions
which have no singularities on $\Pi^d$. Moreover, since $f$ is homogeneous of degree 1, the argument
in \cite[Theorem 3.1]{K-V} shows that these functions satisfy \eqref{eq:factor-homogen}
and \eqref{eq:Bessm-decomp'} and by
homogeneity have no singularities on $\Omega_d$. Thus (2$^\prime$) follows.

(2$^{\prime}$)$\Leftrightarrow$(3): Suppose that (2$^\prime$) holds.
As we have already proved, (2$^\prime$)$\Rightarrow$(2)$\Rightarrow$(1), so
$f\in \mathcal{B}_d^{\rm rat}(\mathbb{C}^n)$. Also, we have shown in the preceding paragraph
that $\mathcal{F}=\mathcal{C}(f)\in\mathcal{ISA}_d^{\rm rat}(\nspace{C}{n})$. Since
$\mathcal{F}(0)=(f(e)-I_n)(f(e)+I_n)^{-1}$, where $e=(1,\ldots,1)$, is Hermitian and
$I_n-\mathcal{F}(0)=2(f(e)+I_n)^{-1}$ is positive definite, $1\notin\sigma(\mathcal{F}(0))$.
By the maximum principle, $1\notin\sigma(\mathcal{F}(\zeta))$ for every $\zeta\in\mathbb{D}^d$.
Using the same argument as in the proof of the necessity part of Theorem 4.2 in \cite{K-V}, we
first obtain \eqref{eq:Agler-decomp'}
 for $\mathcal{F}$ with rational $N_k\times n$ matrix-valued functions
 \begin{equation}\label{eq:phi_to_theta}
 \theta_k(\zeta)=\frac{1}{1-\zeta_k}\phi_k\left(\frac{1+\zeta_1}{1-\zeta_1},\ldots,
 \frac{1+\zeta_d}{1-\zeta_d}\right)(I_n-\mathcal{F}(\zeta))^{-1},
\quad k=1,\ldots,d
\end{equation}
(clearly, the transformation formulas \eqref{eq:phi_to_theta} and \eqref{eq:theta_to_phi}
are the inverses of each other),
and, in addition,
\begin{equation}\label{eq:2nd_Agler-decomp'}
\mathcal{F}(\omega)^*-\mathcal{F}(\zeta)=\sum_{k=1}^d(\overline{\omega}_k-\zeta_k)\theta_k(\omega)^*\theta_k(\zeta).
\end{equation}
Then observing that the reproducing kernel Hilbert spaces ${\mathcal H}(\theta_k(\omega)^*\theta_k(\zeta))$
are finite-dimensional, that argument produces a finite-dimensional
Givone--Roesser representation \eqref{eq:tf-fin} of $\mathcal{F}$ with the colligation matrix $U$ satisfying
$U^{-1}=U^*=U$.

Conversely, suppose that (3) holds. Then it is easy to verify \eqref{eq:Agler-decomp'} and
\eqref{eq:2nd_Agler-decomp'} with
$$\theta_k(\zeta)=P_k(I_m-AP(\zeta))^{-1}B,\quad k=1,\ldots,d$$
(see \eqref{eq:tf-fin}), where $P_k=\diag[0,\ldots, 0,I_{m_k},0,\ldots,0]$. Then, as in the
proof of the sufficiency part of Theorem 4.2 in \cite{K-V}, we obtain
the two decompositions
\begin{equation}\label{eq:pm-Bessm-decomp'}
f(w)^*\pm f(z)=\sum_{k=1}^d(\overline{w}_k\pm
z_k)\phi_k(w)^*\phi_k(z),
\end{equation}
which together are equivalent to \eqref{eq:Bessm-decomp'}, with
$$\phi_k(z)=\frac{1}{z_k+1}P_k\left(I_m-AP\Big(\frac{z_1-1}{z_1+1},\ldots,\frac{z_d-1}{z_d+1}\Big)\right)^{-1}B(f(z)+I_n)$$
(see \eqref{eq:theta_to_phi}). It is clear that the functions $\phi_k$ have no singularities on $\Pi^k$, that they are rational,
and by the argument in the sufficiency part of Theorem 3.1 in \cite{K-V} they satisfy \eqref{eq:factor-homogen}.
By homogeneity, $\phi_k$ have no singularities in $\Omega_d$.

(2$^\prime$)$\Leftrightarrow$(0) can be proved in the same way as Theorem 2.7 in \cite{K-V} with taking care to
use the additional assumption that all the functions involved are rational and finite matrix-valued. Notice that
implication (0)$\Rightarrow$(2$^\prime$) was
 proved in \cite{Bes} (see also \cite[Theorem 3.2]{Bes1}) under an additional assumption of invertibility of $f(z)$.
\end{proof}

\section{Characterizations of the class $\mathbb{R}\mathcal{B}_d^{n\times n}$} \label{sec:RB_d}

We recall that a $\mathbb{C}^{n\times n}$-valued function $f$ is called real if $f$ is $\iota$-real for $\iota$
being the entrywise complex conjugation on $\nspace{C}{n}$. For this $\iota$, we use the notation
$\mathbb{R}\mathcal{B}_d^{\rm rat}(\nspace{C}{n})$ instead of $\iota\mathbb{R}\mathcal{B}_d^{\rm
rat}(\nspace{C}{n})$.

\begin{thm}\label{thm:rb} Let $f$ be a $\mathbb{C}^{n\times n}$-valued function of $d$ complex variables.
 The following statements are equivalent:
\begin{itemize}
\item[(0)] $f\in\mathbb{R}\mathcal{B}_d^{n\times n}$.
\item[(1)] $f\in\mathbb{R}\mathcal{B}_d^{\rm rat}(\nspace{C}{n})$.
\item[(2)] There exist $\mathbb{C}^{n\times n}$-valued functions
$\Phi_k(w,z)$, $k=1,\ldots, d$,
real rational as functions of $z=(z_1,\ldots,z_d)$ and $\overline{w}=(\overline{w}_1,\ldots,\overline{w}_d)$,
which are  positive  kernels on
   $\Omega_d\times\Omega_d$ satisfying \eqref{eq:ker-homogen}
and \eqref{eq:Bessm-decomp}.
    \item[(2$^{\prime}$)] There exist real rational $\rmat{\mathbb{C}}{N_k}{n}$-valued functions $\phi_k$,
    with some $N_k\in\mathbb{N}$, $k=1,\ldots,d$,
     with no singularities on $\Omega_d$ and satisfy \eqref{eq:factor-homogen} and \eqref{eq:Bessm-decomp'}.
   \item[(3)] There exist $m,m_1,\ldots,m_d\in\mathbb{Z}_+$, with $m=m_1+\cdots+m_d$, and a
matrix
\begin{equation}\label{eq:o-matrix}
U=\begin{bmatrix} A & B\\
C & D
\end{bmatrix}\in\mat{\mathbb{R}}{(m+n)}
\end{equation}
which is symmetric and orthogonal, i.e., $U^{-1}=U^T=U$, where
$$A=[A_{ij}]_{i,j=1,\ldots,d},\quad B=\col_{i=1,\ldots,d}[B_i],\quad C=\row_{j=1,\ldots,d}[C_j]$$
are block matrices with blocks $A_{ij}\in\rmat{\mathbb{R}}{m_i}{m_j}$, $B_i\in\rmat{\mathbb{R}}{m_i}{n}$, and
$C_j\in\rmat{\mathbb{R}}{n}{m_j}$, such that $\mathcal{F}=\mathcal{C}(f)$ (see \eqref{eq:double-Cayley}) has the
representation \eqref{eq:tf-fin}, and $1\notin\sigma(\mathcal{F}(0))$.
     \end{itemize}
\end{thm}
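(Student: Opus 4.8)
The plan is to follow the architecture of the proof of Theorem \ref{thm:b} essentially verbatim, establishing the cycle (2$^{\prime}$)$\Rightarrow$(2)$\Rightarrow$(1)$\Rightarrow$(2$^{\prime}$) and then the equivalences (2$^{\prime}$)$\Leftrightarrow$(3) and (2$^{\prime}$)$\Leftrightarrow$(0), while carrying the reality constraint --- encoded by the anti-unitary involution $\iota$ given by entrywise complex conjugation on $\nspace{C}{n}$ --- through each construction and invoking the $\iota$-real addenda to Theorem \ref{thm:bessm-class} in place of its bare form. The two soft implications are immediate. For (2$^{\prime}$)$\Rightarrow$(2) I set $\Phi_k(w,z)=\phi_k(w)^*\phi_k(z)$ and observe that realness of $\phi_k$ (i.e.\ $\phi_k(\bar z)=\overline{\phi_k(z)}$) forces $\Phi_k(\bar w,\bar z)=\overline{\Phi_k(w,z)}$, so each $\Phi_k$ is a real rational positive kernel satisfying \eqref{eq:ker-homogen} and \eqref{eq:Bessm-decomp}. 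For (2)$\Rightarrow$(1), the $\iota$-real version of the implication (2)$\Rightarrow$(0) of Theorem \ref{thm:bessm-class} gives $f\in\iota\mathbb{R}\mathcal{B}_d(\nspace{C}{n})$, and rationality of the $\Phi_k$ in \eqref{eq:Bessm-decomp} makes $f$ rational, so $f\in\mathbb{R}\mathcal{B}_d^{\rm rat}(\nspace{C}{n})$.

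The crux is (1)$\Rightarrow$(2$^{\prime}$), which demands a real refinement of Theorem \ref{thm:ratinSA}. Starting from $f\in\mathbb{R}\mathcal{B}_d^{\rm rat}(\nspace{C}{n})$, I first note that $\mathcal{F}=\mathcal{C}(f)$ lies in $\mathcal{ISA}_d^{\rm rat}(\nspace{C}{n})$ and is moreover real: substituting into \eqref{eq:double-Cayley} and using $\overline{(1+\zeta_k)/(1-\zeta_k)}=(1+\bar\zeta_k)/(1-\bar\zeta_k)$ together with $f(\bar z)=\overline{f(z)}$ yields $\mathcal{F}(\bar\zeta)=\overline{\mathcal{F}(\zeta)}$. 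Consequently the kernel $I_n-\mathcal{F}(\omega)^*\mathcal{F}(\zeta)$ obeys $\overline{I_n-\mathcal{F}(\bar\omega)^*\mathcal{F}(\bar\zeta)}=I_n-\mathcal{F}(\omega)^*\mathcal{F}(\zeta)$. Hence, if $\theta_1,\dots,\theta_d$ are the rational factors with no singularities on $\nspace{D}{d}$ furnished by Theorem \ref{thm:ratinSA}(2$^{\prime}$), then $\sigma_k(\zeta):=\overline{\theta_k(\bar\zeta)}$ give a second Agler decomposition of the same kernel. Stacking $\theta_k$ over $\sigma_k$ and rescaling produces factors that are real up to a fixed coordinate swap $\Pi$ in $\nspace{C}{2N_k}$; a constant unitary change of basis $W$ with $W^{\top}W=\Pi$ (which exists because $\Pi$ is real symmetric orthogonal) converts them into genuinely real rational factors, which we again call $\theta_k$, without altering the kernel. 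Transforming back via \eqref{eq:theta_to_phi} and invoking homogeneity as in \cite[Theorem 3.1]{K-V}, one checks --- using realness of both $\theta_k$ and $f$ --- that the resulting $\phi_k$ are real rational $\rmat{\mathbb{C}}{N_k}{n}$-valued functions with no singularities on $\Omega_d$ satisfying \eqref{eq:factor-homogen} and \eqref{eq:Bessm-decomp'}. I expect this realification --- the passage from ``real up to an involution'' to ``genuinely real'' while keeping the factors rational and finite-dimensional --- to be the main obstacle.

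For (2$^{\prime}$)$\Leftrightarrow$(3) I run the lurking-isometry argument of Theorem \ref{thm:knese} on the real rational factors, exactly as in the proof of (2$^{\prime}$)$\Leftrightarrow$(3) of Theorem \ref{thm:b}. Because the factors are now real, the spans on both sides of the isometry carry the conjugation involution, so the colligation $U$ can be chosen $\iota_{\Hspace{X}}\oplus\iota$-real, i.e.\ with real entries, while the homogeneity of $f$ forces $U$ to be Hermitian just as in the complex case. A real Hermitian unitary is precisely a real symmetric orthogonal matrix, so $U$ takes the form \eqref{eq:o-matrix} with $U^{-1}=U^{\top}=U$, and $1\notin\sigma(\mathcal{F}(0))$ follows as before from $\mathcal{F}(0)=(f(e)-I_n)(f(e)+I_n)^{-1}$ with $e=(1,\dots,1)$. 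Conversely, a real symmetric orthogonal realization \eqref{eq:tf-fin} produces, by the $\phi_k$-formula displayed in the proof of Theorem \ref{thm:b}, factors that are real because $A,B,C,D$ and $f$ are real.

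Finally (2$^{\prime}$)$\Leftrightarrow$(0) is obtained by mirroring the proof of Theorem 2.7 in \cite{K-V} --- equivalently the $\iota$-real addendum (2$^{\prime}\iota$)$\Leftrightarrow$(0$\iota$) to Theorem \ref{thm:bessm-class} --- restricted to rational finite-matrix-valued data: the real rational factors $\phi_k$ assemble into a long resolvent representation \eqref{eq:long-res}--\eqref{eq:lin-pencil} with $A_0=0$ and the $A_k$ real symmetric positive semidefinite, which is the defining property of $\mathbb{R}\mathcal{B}_d^{n\times n}$, and conversely such a representation yields real $\phi_k$. As recorded after Theorem \ref{thm:b}, the implication (0)$\Rightarrow$(2$^{\prime}$) already appears in \cite{Bes,Bes1} under an extra invertibility hypothesis that the present argument dispenses with.
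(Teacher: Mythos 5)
Your proposal is correct and follows the paper's proof essentially verbatim: the same cycle (2$^\prime$)$\Rightarrow$(2)$\Rightarrow$(1)$\Rightarrow$(2$^\prime$), the same reality-tracking through the constructions of Theorem \ref{thm:b} for (2$^\prime$)$\Leftrightarrow$(3) (with $1\notin\sigma(\mathcal{F}(0))$ obtained identically), and the same appeal to Theorem 2.7 of \cite{K-V} for (2$^\prime$)$\Leftrightarrow$(0). The only cosmetic difference is in (1)$\Rightarrow$(2$^\prime$): the paper symmetrizes the half-plane factors directly, replacing $\phi_k$ by $\col\bigl[\frac{\phi_k+\phi_k^\sharp}{\sqrt{2}},\,\frac{\phi_k-\phi_k^\sharp}{\sqrt{2}i}\bigr]$, whereas you realify the disk-level factors $\theta_k$ by stacking with $\theta_k^\sharp$ and applying a constant Takagi unitary $W$ with $W^{\top}W=\Pi$ before transporting via \eqref{eq:theta_to_phi} --- the same symmetrization in different clothing (and your $\tfrac{1}{\sqrt{2}}$ normalization of the stacked factors is in fact the correct one: the paper's displayed $\widetilde{\phi_k}$ as written reproduces $2f$ rather than $f$, a harmless normalization slip).
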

\begin{proof}
We shall follow the same route as in the proof of Theorem \ref{thm:b}, verifying the ``reality" of all functions
and matrices of interest.

(2$^\prime$)$\Rightarrow$(2) is obvious: just define $\Phi_k(w,z)=\phi_k(w)^*\phi_k(z)$, $k=1,\ldots,d$.

(2)$\Rightarrow$(1): It follows from implication (2)$\Rightarrow$(0) of Theorem \ref{thm:bessm-class}, where both
conditions (0) and (2) are modified as indicated in the last part of that theorem, that
$f\in\mathbb{R}\mathcal{B}_d(\mathbb{C}^n)$. Since the kernels $\Phi_k(w,z)$ are rational matrix-valued functions
in both $\overline{w}$ and $z$, the matrix-valued function $f$ in \eqref{eq:Bessm-decomp} is rational.

(1)$\Rightarrow$(2$^\prime$): If (1) holds, then it follows from implication (1)$\Rightarrow$(2$^\prime$) of
Theorem \ref{thm:b} that there exist $N_k\in\mathbb{N}$ and rational $\mathbb{C}^{N_k\times n}$-valued functions
$\phi_k$ with no singularities on $\Omega_d$, $k=1,\ldots,d$, such that \eqref{eq:factor-homogen} and
\eqref{eq:Bessm-decomp'} hold. Moreover, since $f$ is real, \eqref{eq:factor-homogen} and
\eqref{eq:Bessm-decomp'} also hold with $\phi_k$ replaced by $\phi_k^\sharp$, $k=1,\ldots,d$, or with $\phi_k$
replaced by $\widetilde{\phi_k}=\col\Big[\frac{\phi_k+\phi_k^\sharp}{\sqrt{2}},\,
\frac{\phi_k-\phi_k^\sharp}{\sqrt{2}i}\Big]$. It remains to observe that rational $\mathbb{C}^{2N_k\times
n}$-valued functions $\widetilde{\phi_k}$ are real.

(2$^{\prime}$)$\Leftrightarrow$(3): Suppose that (2$^\prime$)
holds. Repeating the construction of matrix $U$ as in the proof of
implication (2$^{\prime}$)$\Rightarrow$(3) of Theorem \ref{thm:b}
and observing that the constructed matrix $U$ is real, we obtain
(3).

Conversely, suppose that (3) holds. Repeating the construction of functions $\phi_k$ as in the proof of
implication (3)$\Rightarrow$(2$^{\prime}$) of Theorem \ref{thm:b} and observing that the constructed functions
$\phi_k$ are real, we obtain (2$^{\prime}$).

(2$^\prime$)$\Leftrightarrow$(0) can be proved in the same way as Theorem 2.7 in \cite{K-V} with taking care to
use the additional assumptions that all the functions involved are real rational matrix-valued and that matrices
$A_k$ in the representation \eqref{eq:long-res}--\eqref{eq:lin-pencil} for $f$ are real. Notice that implication
(0)$\Rightarrow$(2$^\prime$) was proved in \cite{Bes} (see also \cite[Theorem 3.2]{Bes1}) under an additional
assumption of invertibility of $f(z)$.
\end{proof}

\section{Rational Cayley inner Herglotz-Agler-class functions on
${\mathbb D}^{d}$ }  \label{S:CayinD}

 We characterize the rational Cayley inner Herglotz--Agler class
$\mathcal{CIHA}^{\text{rat}}({\mathbb D}^{d}, {\mathbb C}^{n})$
over the polydisk $\nspace{D}{d}$ in the following theorem, which
parallels Theorem \ref{thm:ratinSA} for the rational inner
Schur--Agler class $\mathcal{ISA}_d^{\rm rat}(\nspace{C}{n})$.

\begin{thm}  \label{thm:CayinratHA}
   Let $F$ be a ${\mathbb C}^{n \times n}$-valued function of $d$ complex variables.  The following statements are
   equivalent:
    \begin{enumerate}
    \item[(1)] $F\in\mathcal{CIHA}^{\text{rat}}({\mathbb
    D}^{d}, {\mathbb C}^{n})$.
       \item[(2$^{\prime}$)]  There exist rational
    ${\mathbb C}^{N_{k} \times n}$-valued functions
    $\xi_{k}$, with some $N_k\in\mathbb{N}$, $k=1,\ldots,d$, which have no singularities on $\nspace{D}{d}$ and
    satisfy
    \begin{equation}\label{eq:HA-polydisk-decomp'}
   F(\omega)^{*}  + F(\zeta)  = \sum_{k=1}^{d}(1 - \overline{\omega}_{k} \zeta_{k})
    \xi_{k}(\omega)^{*} \xi_{k}(\zeta).
    \end{equation}
    \item[(3)] There exist $m,m_1,\ldots,m_d\in\mathbb{Z}_+$, with $m=m_1+\cdots+m_d$, a unitary matrix
    $W\in\mathbb{C}^{m\times m}$, a matrix $\beta\in\mathbb{C}^{n\times n}$, and a matrix $V\in\mathbb{C}^{m\times n}$
    such that
    \begin{equation}\label{eq:tf-HA}
   F(\zeta) = \beta + V^{*} ( W - P(\zeta))^{-1} (W + P(\zeta) )V,
    \end{equation}
    where $\beta + \beta^{*} = 0$ and $P(z) = \diag[\zeta_1 I_{m_1},\ldots,\zeta_d I_{m_d}]$.
\end{enumerate}
\end{thm}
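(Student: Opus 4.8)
The plan is to use the (single, over-matrix-values) Cayley transform $\mathcal{F}=(F-I)(F+I)^{-1}$ as a dictionary between $\mathcal{CIHA}^{\rm rat}(\mathbb{D}^d,\mathbb{C}^n)$ and the rational inner Schur--Agler class $\mathcal{ISA}_d^{\rm rat}(\mathbb{C}^n)$, transporting each condition of Theorem \ref{thm:ratinSA} across it, and to prove the cycle (1)$\Rightarrow$(2$'$)$\Rightarrow$(3)$\Rightarrow$(1). Throughout I use that, for fixed $\zeta$, $\mathcal{F}(\zeta)$ is a scalar rational function of the single matrix $F(\zeta)$, so $F(\zeta)$ and $\mathcal{F}(\zeta)$ commute.

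For (1)$\Rightarrow$(2$'$): since $\Re F\ge 0$ on $\mathbb{D}^d$ the matrix $F(\zeta)+I$ is invertible, so $\mathcal{F}=(F-I)(F+I)^{-1}$ is rational, and the Herglotz--Agler and Cayley inner properties of $F$ translate, by the correspondence in \cite{Ag}, into $\mathcal{F}\in\mathcal{ISA}_d^{\rm rat}(\mathbb{C}^n)$. The implication (1)$\Rightarrow$(2$'$) of Theorem \ref{thm:ratinSA} then produces rational $\theta_k$ with no singularities on $\mathbb{D}^d$ satisfying \eqref{eq:Agler-decomp'}. I would next invoke $F=(I+\mathcal{F})(I-\mathcal{F})^{-1}$ together with the algebraic identity $(I-\mathcal{F}(\omega)^*)(F(\omega)^*+F(\zeta))(I-\mathcal{F}(\zeta))=2(I-\mathcal{F}(\omega)^*\mathcal{F}(\zeta))$ and $I-\mathcal{F}(\zeta)=2(F(\zeta)+I)^{-1}$ to convert \eqref{eq:Agler-decomp'} into \eqref{eq:HA-polydisk-decomp'}, with $\xi_k(\zeta)=\tfrac{1}{\sqrt{2}}\theta_k(\zeta)(F(\zeta)+I)$; these are rational and free of singularities on $\mathbb{D}^d$.

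The main work is the lurking-isometry step (2$'$)$\Rightarrow$(3), and I expect it to be the delicate part. Writing $\Xi=\col_{k}[\xi_k]$ and $P(\zeta)=\diag[\zeta_kI_{N_k}]$, \eqref{eq:HA-polydisk-decomp'} reads $\langle F(\zeta)h,h'\rangle+\langle h,F(\omega)h'\rangle=\langle\Xi(\zeta)h,\Xi(\omega)h'\rangle-\langle P(\zeta)\Xi(\zeta)h,P(\omega)\Xi(\omega)h'\rangle$. The key is to normalize at the origin: set $V=\tfrac{1}{\sqrt{2}}\Xi(0)$. The $\omega=0$ case of the decomposition gives $F(\zeta)=\sqrt{2}\,V^*\Xi(\zeta)-F(0)^*$, and the $\zeta=\omega=0$ case gives $2V^*V=\Xi(0)^*\Xi(0)=F(0)+F(0)^*$. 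A telescoping cancellation, subtracting the four instances of \eqref{eq:HA-polydisk-decomp'} at the pairs $(\zeta,\omega)$, $(\zeta,0)$, $(0,\omega)$, $(0,0)$, yields the Gram identity $\langle\Xi(\zeta)h-\sqrt{2}\,Vh,\Xi(\omega)h'-\sqrt{2}\,Vh'\rangle=\langle P(\zeta)\Xi(\zeta)h,P(\omega)\Xi(\omega)h'\rangle$. Hence $\Xi(\zeta)h-\sqrt{2}\,Vh\mapsto P(\zeta)\Xi(\zeta)h$ is a well-defined surjective isometry between two subspaces of $\mathbb{C}^m$, $m=\sum_kN_k$; having equal dimension, these extend it to a \emph{unitary} $W$ on $\mathbb{C}^m$ (this passage from isometric to unitary, and the asymmetry of $F+F^*$ as opposed to the symmetric $p^*p-q^*q$ of Theorem \ref{thm:knese}, is exactly why the normalization at $\zeta=0$ is forced). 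Reading off $(W-P(\zeta))\Xi(\zeta)=\sqrt{2}\,WV$ gives $\Xi(\zeta)=\sqrt{2}\,(W-P(\zeta))^{-1}WV$; substituting into $F(\zeta)=\sqrt{2}\,V^*\Xi(\zeta)-F(0)^*$ and using $2V^*(W-P)^{-1}WV=2V^*V+2V^*(W-P)^{-1}PV$ produces exactly \eqref{eq:tf-HA} with $\beta=V^*V-F(0)^*$, which is skew-Hermitian by the identity $2V^*V=F(0)+F(0)^*$.

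Finally, (3)$\Rightarrow$(1) is a direct verification. With $R(\zeta)=(W-P(\zeta))^{-1}$, unitarity of $W$ and $\|P(\zeta)\|=\max_k|\zeta_k|<1$ on $\mathbb{D}^d$ make $W-P(\zeta)$ invertible there, so $F$ is rational and holomorphic on $\mathbb{D}^d$. Observing that $G(\zeta)=(W-P(\zeta))^{-1}(W+P(\zeta))$ is the operator Cayley transform of the inner Schur--Agler function $S(\zeta)=W^*P(\zeta)$, whose Agler kernels are the constant block projections $E_k$, the identity $G(\zeta)+G(\omega)^*=2(I-S(\omega)^*)^{-1}(I-S(\omega)^*S(\zeta))(I-S(\zeta))^{-1}$ recovers \eqref{eq:HA-polydisk-decomp'} with $\xi_k(\zeta)=\sqrt{2}\,E_kR(\zeta)WV$; taking $\omega=\zeta$ shows $\Re F\ge 0$, and this Agler-type decomposition certifies Herglotz--Agler membership by \cite{Ag}. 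Cayley inner-ness follows since $P(t)$ is unitary for $t\in\mathbb{T}^d$, so $\Re G(t)=W^*R(t)^*(I-P(t)^*P(t))R(t)W=0$ and $F(t)+F(t)^*=V^*(G(t)+G(t)^*)V=0$ a.e., which is \eqref{inner1}. Rationality is preserved at each stage, closing the cycle.
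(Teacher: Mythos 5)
Your proof is correct, but it runs the cycle in the opposite direction from the paper and, in its central step, uses a genuinely different device. The paper proves (1)$\Rightarrow$(3)$\Rightarrow$(2$'$)$\Rightarrow$(1): its (1)$\Rightarrow$(3) first normalizes by writing $F(\zeta)=\beta+\delta^*F_+(\zeta)\delta$ with $F_+(0)=I_r$ (splitting $F(0)$ into skew and positive parts, factoring $\gamma=\delta^*\delta$, and using the maximum principle to see $\ker(F(\zeta)-\beta)$ is constant), then Cayley-transforms to an inner rational Schur--Agler $\mathcal{F}_+$ with $\mathcal{F}_+(0)=0$, invokes Theorem \ref{thm:ratinSA}(3) to get a unitary Givone--Roesser colligation with $D=0$, and follows Agler's block-inversion computation to show $W^*=A+BC$ is unitary and $F_+(\zeta)=B^*(W-P(\zeta))^{-1}(W+P(\zeta))B$, finally setting $V=B\delta$. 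You instead prove (2$'$)$\Rightarrow$(3) directly by a lurking isometry normalized at the origin, a step the paper never carries out in this form; I checked the details and they work: with $V=\tfrac{1}{\sqrt2}\Xi(0)$, the four-point telescoping of \eqref{eq:HA-polydisk-decomp'} does yield the Gram identity $\langle\Xi(\zeta)h-\sqrt2\,Vh,\Xi(\omega)h'-\sqrt2\,Vh'\rangle=\langle P(\zeta)\Xi(\zeta)h,P(\omega)\Xi(\omega)h'\rangle$, the extension to a unitary $W$ on $\mathbb{C}^m$ is legitimate in finite dimensions (equal-dimensional domain and range spans), $(W-P(\zeta))\Xi(\zeta)=\sqrt2\,WV$ gives \eqref{eq:tf-HA} via $(W-P)^{-1}W=I+(W-P)^{-1}P$, and $\beta=V^*V-F(0)^*$ is skew-Hermitian precisely because $2V^*V=F(0)+F(0)^*$. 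Your (1)$\Rightarrow$(2$'$) conversion $\xi_k=\tfrac{1}{\sqrt2}\theta_k(F+I)$, justified by the identity $(I-\mathcal{F}(\omega)^*)(F(\omega)^*+F(\zeta))(I-\mathcal{F}(\zeta))=2(I-\mathcal{F}(\omega)^*\mathcal{F}(\zeta))$, is exactly the disk-variable analogue of the paper's transformation \eqref{eq:theta_to_phi}, and your (3)$\Rightarrow$(1) combines the paper's (3)$\Rightarrow$(2$'$) (your $\xi_k(\zeta)=\sqrt2\,E_k(I_m-W^*P(\zeta))^{-1}V$ agrees with the paper's $\sqrt2\,P_k(I_m-W^*P(\zeta))^{-1}V$) with its hereditary-calculus argument for (2$'$)$\Rightarrow$(1). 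What each approach buys: yours is shorter and more self-contained, avoiding the rank-reduction lemma, the maximum-principle step, and Agler's computation, and it produces the explicit formula $\beta=V^*V-F(0)^*$; the paper's constructive route, by contrast, tracks exactly how $W$ and $V$ arise from a Hermitian-able colligation $U$ (namely $W^*=A+BC$, $V=B\delta$), structural information that is reused verbatim in the proof of Theorem \ref{thm:Bessm-vs-CIHA-polydisk} to read off properties (i)--(iii) there, which your abstract lurking-isometry construction would not directly supply. One small point worth making explicit in your (3)$\Rightarrow$(1): $\det(W-P(t))$ is a polynomial not vanishing identically (it is nonzero on $\mathbb{D}^d$), so its zero set on $\mathbb{T}^d$ has measure zero, which is what licenses your ``a.e.'' conclusion; the paper handles the same issue via the measure-zero singular set of the rational $\xi_k$.
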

\begin{proof}
(1)$\Rightarrow$(3): Represent $F(0)=\beta+\gamma$, where
$\beta=-\beta^*$ and $\gamma=\gamma^*$. Since
$F\in\mathcal{HA}(\nspace{D}{d},\nspace{C}{n})$, the matrix
$\gamma$ is positive semidefinite. Then $\gamma=\delta^*\delta$
with some matrix $\delta\in\mathbb{C}^{r\times n}$ of full row
rank $r$($={\rm rank}\,\gamma$).  We also have that
$F-\beta\in\mathcal{CIHA}^{\rm rat}(\nspace{D}{d},\nspace{C}{n})$.
By the maximum principle, $\ker(F(\zeta)-\beta)=\ker(F(0)-\beta)$
($=\ker\gamma$). Therefore, one can represent $F$ as
$$F(\zeta)=\beta+\delta^*F_+(\zeta)\delta,$$
with $F_+\in\mathcal{CIHA}^{\rm rat}(\nspace{D}{d},\nspace{C}{r})$ satisfying $F_+(0)=I_r$. Define
\begin{equation}\label{eq:Cayley}
\mathcal{F}_+(\zeta)=(F_+(\zeta)-I_r)(F_+(\zeta)+I_r)^{-1}.
\end{equation}
 We have $\mathcal{F}_+\in\mathcal{ISA}_d^{\rm
rat}(\nspace{C}{r})$. By Theorem \ref{thm:ratinSA}, there exist $m$, $m_1$, \ldots, $m_d\in\mathbb{Z}_+$, with
$m=m_1+\cdots+m_d$, and a
unitary matrix $U=\begin{bmatrix} A & B\\
C & D\end{bmatrix}\in\mathbb{C}^{(m+r)\times(m+r)}$ such that
\eqref{eq:tf-fin} holds (with $n$ replaced by $r$ and
$\mathcal{F}$ replaced by $\mathcal{F}_+$). Notice that
$D=\mathcal{F}_+(0)=0$. Therefore, $U$ has the form
\begin{equation}\label{eq:U-decomp}
U=\begin{bmatrix}
A_0 & 0   & 0\\
0   & 0   & B_0\\
0   & C_0 & 0
\end{bmatrix}\colon\,\begin{bmatrix}
\ker C\\
{\rm range}\,C^*\\
\mathbb{C}^r
\end{bmatrix}\to\,\begin{bmatrix}
\ker B^*\\
{\rm range}\,B\\
\mathbb{C}^r
\end{bmatrix}.
\end{equation} In order to obtain a representation \eqref{eq:tf-HA} for $F$, we first obtain a similar representation for
$F_+$ applying the argument from \cite[Pages 63--64]{Ag}. We first
rewrite \eqref{eq:Cayley} as
$$\mathcal{F}_+(\zeta)(F_+(\zeta)+I_r)=F_+(\zeta)-I_r.$$
Together with \eqref{eq:tf-fin}, this is equivalent to
$$\begin{bmatrix}
P(\zeta)A & P(\zeta)B\\
C     &  0
\end{bmatrix}
\begin{bmatrix}
X(\zeta)\\
F_+(\zeta)+I_r
\end{bmatrix}=
\begin{bmatrix}
X(\zeta)\\
F_+(\zeta)-I_r
\end{bmatrix},$$
with $X(\zeta)=(I_r-P(\zeta)A)^{-1}P(\zeta)B(F_+(\zeta)+I_r)$, or to
$$\begin{bmatrix}
X(\zeta)\\
F_+(\zeta)
\end{bmatrix}=\begin{bmatrix}
I_m-P(\zeta)A & -P(\zeta)B\\
-C     &  I_r
\end{bmatrix}^{-1}\begin{bmatrix}
P(\zeta)B\\
I_r
\end{bmatrix}.$$
Using the block-matrix inversion formula (see, e.g.,
\cite[II.5.4]{G}), we obtain
$$\begin{bmatrix}
I_m-P(\zeta)A & -P(\zeta)B\\
-C     &  I_r
\end{bmatrix}^{-1}=\begin{bmatrix}
(I_m-P(\zeta)W^*)^{-1} & (I_m-P(\zeta)W^*)^{-1}P(\zeta)B\\
C(I_m-P(\zeta)W^*)^{-1}     &  C(I_m-P(\zeta)W^*)^{-1}P(\zeta)B+I_r
\end{bmatrix},$$
where $W^*=A+BC$. Taking into account \eqref{eq:U-decomp}, we can rewrite $W^*$ as
$$W^*=\begin{bmatrix}
A_0 & 0\\
0   & B_0C_0
\end{bmatrix}\colon\begin{bmatrix}
\ker C\\
 {\rm range}\,C^*
 \end{bmatrix}\to\begin{bmatrix}
 \ker B^*\\
 {\rm range}\,B
 \end{bmatrix},$$
 and since $A_0\colon\ker C\to \ker B^*$, $B_0\colon\nspace{C}{r}\to{\rm range}\,B$, and $C_0\colon{\rm
 range}\,C_0^*\to\nspace{C}{r}$ are unitary operators, so is $W^*$. Identifying the operator $W\colon
 \nspace{C}{m+r}\to\nspace{C}{m+r}$ with its matrix in the standard basis, we conclude that $W$ is a unitary
 matrix.
We have therefore
$$F_+(\zeta)=2C(I_m-P(\zeta)W^*)^{-1}P(\zeta)B+I_r.$$
Observing that $B^*B=I_r$ and $B^*W^*=C$, we obtain
\begin{multline*}
F_+(\zeta)=2B^*W^*(I_m-P(\zeta)W^*)^{-1}P(\zeta)B+B^*B\\
=2B^*(I_m-W^*P(\zeta))^{-1}W^*P(\zeta)B+B^*B\\
=B^*(I_m-W^*P(\zeta))^{-1}(I_m+W^*P(\zeta))B\\
=B^*(W-P(\zeta))^{-1}(W+P(\zeta))B. \end{multline*} Setting $V=B\delta$, we obtain the desired representation
\eqref{eq:tf-HA} for $F$.

(3)$\Rightarrow$(2$^\prime$): Using \eqref{eq:tf-HA}, one easily
obtains \eqref{eq:HA-polydisk-decomp'} with functions
$$\xi_k(\zeta)=\sqrt{2}P_k(I_m-W^*P(\zeta))^{-1}V$$ (here
$N_k=m_k$)  having the required properties.

(2$^\prime$)$\Rightarrow$(1): Using hereditary functional calculus
as in \cite{Ag}, we obtain from \eqref{eq:HA-polydisk-decomp'}
that
$$F(T)^*+F(T)=\sum_{k=1}^d(I_\mathcal{H}-T_k^*T_k)\xi_k(T)^*\xi_k(T)$$
is a positive semidefinite operator on
$\nspace{C}{n}\otimes\mathcal{H}\cong\mathcal{H}^n$ for every
$d$-tuple $T=(T_1,\ldots,T_d)$ of commuting strict contractions on
a Hilbert space $\mathcal{H}$. It is also obvious from
\eqref{eq:HA-polydisk-decomp'} that $F$ is rational, and has no
singularities on $\nspace{D}{d}$. Finally, the union of
singularity sets for rational matrix-valued functions $\xi_k$,
$k=1,\ldots,d$, inside the unit torus $\nspace{T}{d}$ is of
measure zero (with respect to the Lebesgue measure on
$\nspace{T}{d}$). Hence, $F$ is regular almost everywhere on
$\nspace{T}{d}$, and we see from \eqref{eq:HA-polydisk-decomp'}
that at those regular points $\zeta$ we have
$F(\zeta)^*+F(\zeta)=0$, i.e., $F$ is Cayley inner.
\end{proof}

\section{Rational Cayley inner Herglotz-Agler-class functions on
$\Pi^{d}$}  \label{S:CayinPi}

In this section, we characterize the rational Cayley inner
Herglotz--Agler class $\mathcal{CIHA}^{\text{rat}}(\Pi^d, {\mathbb
C}^{n})$ over the poly-halfplane $\Pi^{d}$. As we mentioned in
Introduction, it can be viewed as a version of the class
${\mathcal B}_{d}^{\rm rat}(\nspace{C}{n})$ with the homogeneity
condition \eqref{eq:homog} dropped. Let us say that $f$ is in the
nonhomogeneous Bessmertny\u{\i} class $\widetilde {\mathcal
B}_{d}^{n \times n}$ if $f$ has a long resolvent representation as
in \eqref{eq:long-res}  and \eqref{eq:lin-pencil} subject to the
conditions
\begin{equation}  \label{eq:NHBes}
    A_{0} = -A_{0}^{*}, \quad A_{k} = A_{k}^{*} \ge 0 \text{ for } k
    = 1, \dots, d.
 \end{equation}
 Then we have the following result.

 \begin{thm}  \label{thm:NHBes}
    Let $f$ be a ${\mathbb C}^{n \times n}$-valued function of $d$ complex variables.  The following statements are
   equivalent:
     \begin{itemize}
     \item[(0)] $f\in\widetilde {\mathcal
B}_{d}^{n \times n}$.
     \item[(1)] $f\in \mathcal{CIHA}^{\text{rat}}(\Pi^{d}, {\mathbb
     C}^{n}$).
         \item[(2$^{\prime}$)] There exist rational ${\mathbb
     C}^{N_{k} \times n}$-valued functions $\phi_{k}$ with some $N_{k} \in {\mathbb N}$,
     $k = 1, \dots, d$, so that
\begin{equation}\label{eq:HA-polyhalfpl-decomp'}
     f(w)^{*} + f(z)  = \sum_{k=1}^{d} (\overline{w}_{k} + z_{k})
     \phi_{k}(w)^{*} \phi_{k}(z).
     \end{equation}

     \item[(3)] $\mathcal{F} = {\mathcal C}(f)\in\mathcal{ISA}^{\rm rat}_{d}({\mathbb C}^{n})$.
      \end{itemize}
      \end{thm}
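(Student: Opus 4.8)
The plan is to prove Theorem \ref{thm:NHBes} by establishing the cycle of implications $(0)\Rightarrow(1)\Rightarrow(2')\Rightarrow(3)\Rightarrow(0)$, leveraging the machinery already built. The key observation throughout is that the Cayley transform $\mathcal{F}=\mathcal{C}(f)$ converts the additive positivity structure on $\Pi^d$ into the inner/Schur--Agler structure on $\mathbb{D}^d$, so that the heart of the argument is the interplay between the poly-halfplane decomposition \eqref{eq:HA-polyhalfpl-decomp'} and the polydisk decomposition \eqref{eq:Agler-decomp'}, mediated by the coordinate change $\zeta_k \mapsto z_k = (1+\zeta_k)/(1-\zeta_k)$.

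For $(1)\Leftrightarrow(3)$, I would argue as follows. If $f\in\mathcal{CIHA}^{\rm rat}(\Pi^d,\mathbb{C}^n)$, then by definition $f$ is holomorphic on $\Pi^d$ with positive semidefinite real part, so $\mathcal{F}=\mathcal{C}(f)$ is contractive and, since $f$ has vanishing real part on $(i\mathbb{R})^d$, the Cayley transform $\mathcal{F}$ is inner; the Herglotz--Agler membership on commuting accretive tuples translates, via the operator Cayley transform \eqref{eq:op-Cayley}, into the Schur--Agler condition on commuting strict contractions. Rationality is preserved by the coordinate change. Conversely, if $\mathcal{F}\in\mathcal{ISA}_d^{\rm rat}(\mathbb{C}^n)$ with $1\notin\sigma(\mathcal{F}(0))$ (which one must verify or arrange), then $f=\mathcal{C}^{-1}(\mathcal{F})$ lies in $\mathcal{CIHA}^{\rm rat}(\Pi^d,\mathbb{C}^n)$ by reversing each step. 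This is essentially the definitional content of the double Cayley transform combined with the rationality transfer, so it should be the most routine part.

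For $(1)\Rightarrow(2')$ and $(2')\Rightarrow(1)$, the strategy mirrors the proof of Theorem \ref{thm:CayinratHA} but on the poly-halfplane side. Given $\mathcal{F}=\mathcal{C}(f)\in\mathcal{ISA}_d^{\rm rat}(\mathbb{C}^n)$, Theorem \ref{thm:ratinSA} furnishes rational $\theta_k$ with no singularities on $\mathbb{D}^d$ satisfying \eqref{eq:Agler-decomp'}; then I would set
\begin{equation*}
\phi_k(z)=\frac{1}{z_k+1}\theta_k\left(\frac{z_1-1}{z_1+1},\ldots,\frac{z_d-1}{z_d+1}\right)(f(z)+I_n),\quad k=1,\ldots,d,
\end{equation*}
exactly as in \eqref{eq:theta_to_phi}, and verify \eqref{eq:HA-polyhalfpl-decomp'} by multiplying \eqref{eq:Agler-decomp'} out and using the identity $1-\overline{\zeta}_k\zeta_k = (\overline{w}_k+z_k)/((\overline{w}_k+1)(z_k+1))$ under the coordinate change, combined with $\mathcal{F}(\zeta)=(f(z)-I_n)(f(z)+I_n)^{-1}$. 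The direction $(2')\Rightarrow(1)$ follows from hereditary functional calculus applied to \eqref{eq:HA-polyhalfpl-decomp'}: for any commuting accretive tuple $R$, the identity gives $f(R)^*+f(R)=\sum_k(R_k^*+R_k)^{1/2}\phi_k(R)^*\phi_k(R)(R_k^*+R_k)^{1/2}\ge 0$, establishing the Herglotz--Agler property, while the boundary behavior on $(i\mathbb{R})^d$ gives the Cayley inner condition, as in the end of the proof of Theorem \ref{thm:CayinratHA}.

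For $(0)\Leftrightarrow(1)$, the forward direction $(0)\Rightarrow(1)$ should be direct: a long resolvent representation \eqref{eq:long-res} with $A_0=-A_0^*$ and $A_k=A_k^*\ge 0$ manifestly produces a rational function whose real part on $\Pi^d$ is governed by $\sum_k (\Re z_k) A_k\ge 0$, hence $f\in\mathcal{CIHA}^{\rm rat}(\Pi^d,\mathbb{C}^n)$, with the skew-Hermitian $A_0$ contributing only to the imaginary part and vanishing real part on $(i\mathbb{R})^d$. \textbf{The main obstacle is the reverse implication $(2')\Rightarrow(0)$}, constructing the long resolvent representation with the precise spectral structure \eqref{eq:NHBes}. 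Here I expect to adapt the lurking-isometry argument from the proof of Theorem \ref{thm:knese} together with the colligation structure of Theorem \ref{thm:CayinratHA}(3): starting from \eqref{eq:HA-polyhalfpl-decomp'}, I would rearrange to isolate the accretive structure, build a conservative colligation, and then read off the linear pencil $A(z)=A_0+z_1A_1+\cdots+z_dA_d$ by a careful bookkeeping that forces $A_0$ skew-Hermitian and the remaining $A_k$ positive semidefinite. The delicate point is ensuring that the nonhomogeneous term $A_0$ emerges correctly from the constant part $\beta$ of the transfer representation \eqref{eq:tf-HA} while the positive semidefinite blocks arise from the squared factors $\phi_k(w)^*\phi_k(z)$, which I would handle by following the template of Theorem 2.7 in \cite{K-V} with the homogeneity condition relaxed.
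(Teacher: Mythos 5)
Your outline matches the paper on the routine equivalences, but it has two genuine gaps. First, your argument for (0)$\Rightarrow$(1) is insufficient: observing that $\Re A(z)=\sum_k(\Re z_k)A_k\ge 0$ only shows that the Schur complement $f$ has positive semidefinite real part pointwise on $\Pi^d$, i.e.\ that $f$ is Herglotz; membership in $\mathcal{CIHA}^{\rm rat}(\Pi^d,\mathbb{C}^n)$ requires positivity of $\Re f(R)$ on all commuting strictly accretive \emph{operator} tuples $R$, and for $d\ge 3$ the Herglotz and Herglotz--Agler classes differ, so the pointwise statement does not yield (1). The paper avoids this by proving (0)$\Rightarrow$(2$^\prime$) directly: writing $\psi(z)=\begin{bmatrix} I\\ -A_{22}(z)^{-1}A_{21}(z)\end{bmatrix}$ one gets the two-variable identity $f(w)^*+f(z)=\psi(w)^*\left(A(w)^*+A(z)\right)\psi(z)$, whence \eqref{eq:HA-polyhalfpl-decomp'} holds with $\phi_k=A_k^{1/2}\psi$; the Agler-type positivity then follows from the decomposition, not the other way around. (Incidentally, your hereditary-calculus identity is miswritten: the correct evaluation of \eqref{eq:HA-polyhalfpl-decomp'} is $f(R)^*+f(R)=\sum_k\phi_k(R)^*\bigl(I\otimes(R_k+R_k^*)\bigr)\phi_k(R)$, since $(R_k+R_k^*)^{1/2}$ need not commute with $\phi_k(R)^*\phi_k(R)$; the conclusion survives, but the displayed formula does not.)

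Second, and more seriously, the implication you correctly flag as the main obstacle --- producing a long-resolvent representation with \eqref{eq:NHBes} --- is left as ``lurking isometry plus careful bookkeeping,'' and that mechanism does not work as stated. A lurking-isometry rearrangement of \eqref{eq:HA-polyhalfpl-decomp'} produces (after Cayley transform, which the sign structure $\overline{w}_k+z_k$ forces anyway) a unitary colligation for $\mathcal{F}$, but nothing in that construction makes the constant coefficient skew-Hermitian and the others positive semidefinite. The paper's proof of (3)$\Rightarrow$(0) instead passes through Theorem \ref{thm:CayinratHA} to get the realization $F(\zeta)=\beta+V^*(W-P(\zeta))^{-1}(W+P(\zeta))V$ with $W$ unitary and $\beta=-\beta^*$, transforms back to $\Pi^d$, and then confronts the real difficulty: $W-I_m$ is typically singular, so one cannot simply ``read off'' a pencil from $M(z)=\bigl(P(z)(W-I_m)+(W+I_m)\bigr)^{-1}\bigl(P(z)(W+I_m)+(W-I_m)\bigr)$. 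The key device (adapted from \cite{ADY1}) is to split off the eigenspace $\mathcal{H}=\ker(W-I_m)$ and take the \emph{partial} Cayley transform $\alpha=(I_{\mathcal{H}^\perp}-W_0)^{-1}(I_{\mathcal{H}^\perp}+W_0)$, which is skew-adjoint precisely because $W_0$ is unitary without eigenvalue $1$; a block Schur-complement computation then exhibits $f$ in the form \eqref{eq:long-res} with coefficients \eqref{eq:0-coef}--\eqref{eq:k-coefs}, where $A_0^*=-A_0$ comes from $\beta$, $\alpha$, and $J=-\alpha(I-\alpha^2)^{-1}$, and $A_k\ge 0$ comes from compressions of the projections $P_k$. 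Without this partial-Cayley step your plan stalls exactly at the point you call ``bookkeeping,'' so the proposal, while structurally sensible, is missing the central idea of the proof.
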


     \begin{proof}
         The equivalence (1)$\Leftrightarrow$(3) is
         straightforward. The equivalence
(2$^{\prime}$)$\Leftrightarrow$(3)
         follows directly from the equivalence (2$^{\prime}$)$\Leftrightarrow$(3)
                  in Theorem \ref{thm:ratinSA} upon taking double Cayley
         transform and converting \eqref{eq:Agler-decomp'} to
         \eqref{eq:HA-polyhalfpl-decomp'} using
         \eqref{eq:theta_to_phi} or, equivalently,
         \eqref{eq:phi_to_theta}. To complete the proof, it will
         suffice to show implications
         (0)$\Rightarrow$(2$^{\prime}$)and (3)$\Rightarrow$(0).

         (0)$\Rightarrow$(2$^{\prime}$):  This piece is carried
         out in the proof of \cite[Theorem 4.3]{B} for the more general
         nonrational case, where infinite-dimensional
         long-resolvent representations are allowed, i.e., the state space ${\mathbb
         C}^{m}$ is replaced by an infinite-dimensional
         Hilbert space ${\mathcal X}$ (see also a similar argument for the homogeneous case in the proof of
         \cite[Theorem 2.7]{K-V}).
          We here tailor
         the argument presented there for the case of a
         finite-dimensional state space $\cX = {\mathbb C}^{m}$.
         The assumption (0) gives us a long-resolvent
         representation for $f$:
      $$
    f(z)  = A_{11}(z) - A_{12}(z) A_{22}(z)^{-1} A_{21}(z)
    $$
    where
 $$
\begin{bmatrix}  A_{11}(z) & A_{12}(z) \\ A_{21}(z) &   A_{22}(z) \end{bmatrix} = A(z): = A_{0} + z_1A_{1}
    + \cdots + z_dA_{d}
$$
and the coefficients $A_k$ satisfy \eqref{eq:NHBes}.
     We compute
    \begin{align*}
        f(z) & = \begin{bmatrix} I_m &
         -A_{21}(w)^* A_{22}(w)^{*-1}\end{bmatrix}
\begin{bmatrix} A_{11}(z) - A_{12}(z) A_{22}(z)^{-1} A_{21}(z) \\ 0 \end{bmatrix}  \\
    & = \begin{bmatrix} I_m \\
         - A_{22}(w)^{-1}A_{21}(w)\end{bmatrix}^*
    \begin{bmatrix} A_{11}(z) & A_{12}(z) \\ A_{21}(z)
    & A_{22}(z) \end{bmatrix} \begin{bmatrix} I_m \\ -A_{22}(z)^{-1}
    A_{21}(z) \end{bmatrix}\\
    & = \psi(w)^* A(z) \psi(z),
    \end{align*}
 where $\psi(z) : =
\begin{bmatrix} I_m \\ -A_{22}(z)^{-1}
    A_{21}(z) \end{bmatrix}$ is a
    rational $\mathbb{C}^{(m+n)\times n}$-valued function.
    Interchanging the roles of $z$ and $w$, we obtain also
    $$f(w)^*=\psi(w)^*A(w)^*\psi(z).$$
Therefore
    $$ f(w)^{*}+f(z) =\psi(w)^*(A(w)^*+A(z))\psi(z)=
    \sum_{k=1}^{d} (\overline{w}_{k}+z_{k})
    \phi_{k}(w)^*\phi_k(z),
         $$
with $\phi_k(z)=A_k^{1/2}\psi(z)$, $k=1,\ldots,d$.

    (3)$\Rightarrow$(0): Applying the single Cayley transformation
    over the variables to $f$ or the single Cayley transformation
    over the values to $\mathcal{F}$, we obtain
    $F\in\mathcal{CIHA}^{\rm rat}(\nspace{D}{d},\nspace{C}{n})$:
    \begin{equation}\label{eq:single-Cayley}
F(\zeta)=f\left(\frac{1+\zeta_1}{1-\zeta_1},\ldots,\frac{1+\zeta_d}{1-\zeta_d}\right)
=\Big(\mathcal{F}(\zeta)-I_n\Big)^{-1}\Big(\mathcal{F}(\zeta)+I_n\Big).
    \end{equation}
    By Theorem \ref{thm:CayinratHA}, $F$ admits a representation
    \eqref{eq:tf-HA}. Then
    \begin{equation}\label{eq:f_via_M}
f(z)=F\left(\frac{z_1-1}{z_1+1},\ldots,\frac{z_d-1}{z_d+1}\right)=\beta+V^*M(z)V,
    \end{equation}
    where
\begin{multline*}
M(z)=\left(W-P\Big(\frac{z_1-1}{z_1+1},\ldots,\frac{z_d-1}{z_d+1}\Big)\right)^{-1}\left(W+P\Big(
\frac{z_1-1}{z_1+1},\ldots,\frac{z_d-1}{z_d+1}\Big)\right)\\
=\Big(W-(P(z)+I_m)^{-1}(P(z)-I_m)\Big)^{-1}\Big(W+(P(z)+I_m)^{-1}(P(z)-I_m)\Big)\\
=\Big((P(z)+I_m)W-(P(z)-I_m)\Big)^{-1}\Big((P(z)+I_m)W+(P(z)-I_m)\Big)\\
=\Big(P(z)(W-I_m)+(W+I_m)\Big)^{-1}\Big(P(z)(W+I_m)+(W-I_m)\Big).
\end{multline*}
In order to obtain a more detailed representation for $M$ (and
eventually, for $f$), we adopt the idea from \cite{ADY1} of using
a partial Cayley transform of $W$ as follows (the paper
\cite{ADY1} deals with the Nevanlinna--Agler class which can be
obtained from the Herglotz--Agler class by multiplying all the
variables and the function values by $i$). Let
$\mathcal{H}\subset\nspace{C}{m}$ be the eigenspace of $W$
corresponding to the eigenvalue $1$. Then, with respect to the
orthogonal decomposition
$\nspace{C}{m}=\mathcal{H}\oplus\mathcal{H}^\perp$, one has
$$W=\begin{bmatrix}
I_\mathcal{H} & 0\\
0 & W_0
\end{bmatrix},\quad W-I_m=\begin{bmatrix}
0 & 0\\
0 & W_0-I_{{\mathcal H}^\perp}
\end{bmatrix}, \quad W+I_m=\begin{bmatrix}
2I_\mathcal{H} & 0\\
0 & W_0+I_{{\mathcal H}^\perp}
\end{bmatrix},$$
where $W_0-I_{\mathcal{H}^\perp}$ is invertible. We thus obtain
\begin{multline*}
M(z)=\left(P(z)\begin{bmatrix}
0 & 0\\
0 & W_0-I_{{\mathcal H}^\perp}
\end{bmatrix}+\begin{bmatrix}
2I_\mathcal{H} & 0\\
0 & W_0+I_{{\mathcal H}^\perp}
\end{bmatrix}\right)^{-1}\\
\cdot\left(P(z)\begin{bmatrix}
2I_\mathcal{H} & 0\\
0 & W_0+I_{{\mathcal H}^\perp}
\end{bmatrix}+\begin{bmatrix}
0 & 0\\
0 & W_0-I_{{\mathcal H}^\perp}
\end{bmatrix}\right).\end{multline*}
Set
$$\alpha:=(I_{\mathcal{H}^\perp}-W_0)^{-1}(I_{\mathcal{H}^\perp}+W_0).$$
Then
$$W_0=(\alpha-I_{\mathcal{H}^\perp})(\alpha+I_{\mathcal{H}^\perp})^{-1}=I_{\mathcal{H}^\perp}-
2(\alpha+I_{\mathcal{H}^\perp})^{-1},$$ and we can rewrite $M(z)$
first as
\begin{multline*}
M(z) =\begin{bmatrix}
\frac{1}{2}I_\mathcal{H} & 0\\
0 & (W_0-I_{{\mathcal H}^\perp})^{-1}
\end{bmatrix}\\
\cdot\left(P(z)\begin{bmatrix}
0 & 0\\
0 & I_{{\mathcal H}^\perp}
\end{bmatrix}+\begin{bmatrix}
I_\mathcal{H} & 0\\
0 & (W_0+I_{{\mathcal H}^\perp})(W_0-I_{\mathcal{H}^\perp})^{-1}
\end{bmatrix}\right)^{-1}\\
\cdot\left(P(z)\begin{bmatrix}
I_\mathcal{H} & 0\\
0 & (W_0+I_{{\mathcal H}^\perp})(W_0-I_{\mathcal{H}^\perp})^{-1}
\end{bmatrix}+\begin{bmatrix}
0 & 0\\
0 & I_{{\mathcal H}^\perp}
\end{bmatrix}\right)\\
\cdot\begin{bmatrix}
2I_\mathcal{H} & 0\\
0 & W_0-I_{{\mathcal H}^\perp}
\end{bmatrix},\end{multline*}
and then as
\begin{multline}
M(z)=\begin{bmatrix}
I_\mathcal{H} & 0\\
0 & -(\alpha+I_{{\mathcal H}^\perp})
\end{bmatrix}\left(P(z)\begin{bmatrix}
0 & 0\\
0 & I_{{\mathcal H}^\perp}
\end{bmatrix}+\begin{bmatrix}
I_\mathcal{H} & 0\\
0 & -\alpha
\end{bmatrix}\right)^{-1}\\
\cdot\left(P(z)\begin{bmatrix}
I_\mathcal{H} & 0\\
0 & -\alpha
\end{bmatrix}+\begin{bmatrix}
0 & 0\\
0 & I_{{\mathcal H}^\perp}
\end{bmatrix}\right)\begin{bmatrix}
I_\mathcal{H} & 0\\
0 & -(\alpha+I_{{\mathcal H}^\perp})^{-1}
\end{bmatrix}. \label{eq:M_ADY1}
\end{multline}
We observe that since the operator $W_0$ is unitary, its Cayley
transform $\alpha$ is skew-adjoint, i.e., $\alpha^*=-\alpha$.
Therefore, we can rewrite \eqref{eq:M_ADY1} as
$$M(z)=\begin{bmatrix}
I_\mathcal{H} & 0\\
0 & -(\alpha+I_{{\mathcal H}^\perp})
\end{bmatrix}N(z)\begin{bmatrix}
I_\mathcal{H} & 0\\
0 & -(\alpha+I_{{\mathcal H}^\perp})^*
\end{bmatrix},$$ where
\begin{multline*}
N(z)=\left(P(z)\begin{bmatrix}
0 & 0\\
0 & I_{{\mathcal H}^\perp}
\end{bmatrix}+\begin{bmatrix}
I_\mathcal{H} & 0\\
0 & -\alpha
\end{bmatrix}\right)^{-1}\\
\cdot\left(P(z)\begin{bmatrix}
I_\mathcal{H} & 0\\
0 & -\alpha
\end{bmatrix}+\begin{bmatrix}
0 & 0\\
0 & I_{{\mathcal H}^\perp}
\end{bmatrix}\right)\begin{bmatrix}
I_\mathcal{H} & 0\\
0 & (I_{{\mathcal H}^\perp}-\alpha^2)^{-1}
\end{bmatrix}
\end{multline*}
and we use that
$-(\alpha+I_{\mathcal{H}^\perp})^*=\alpha-I_{\mathcal{H}^\perp}$.
Writing
$$P(z)=\begin{bmatrix}
P_{11}(z) & P_{12}(z)\\
P_{21}(z) & P_{22}(z)
\end{bmatrix},$$ we compute
\begin{multline*}
\left(P(z)\begin{bmatrix}
0 & 0\\
0 & I_{{\mathcal H}^\perp}
\end{bmatrix}+\begin{bmatrix}
I_\mathcal{H} & 0\\
0 & -\alpha
\end{bmatrix}\right)^{-1}=\begin{bmatrix}
I_\mathcal{H} & P_{12}(z)\\
0 & P_{22}(z)-\alpha
\end{bmatrix}^{-1}\\
=\begin{bmatrix}
I_\mathcal{H} & -P_{12}(z)(P_{22}(z)-\alpha)^{-1}\\
0 & (P_{22}(z)-\alpha)^{-1}
\end{bmatrix},
\end{multline*}
\begin{multline*}
\left(P(z)\begin{bmatrix}
I_\mathcal{H} & 0\\
0 & -\alpha
\end{bmatrix}+\begin{bmatrix}
0 & 0\\
0 & I_{{\mathcal H}^\perp}
\end{bmatrix}\right)\begin{bmatrix}
I_\mathcal{H} & 0\\
0 & (I_{{\mathcal H}^\perp}-\alpha^2)^{-1}
\end{bmatrix}\\
=\begin{bmatrix} P_{11}(z) &
P_{12}(z)J\\
P_{21}(z) &
(I_{\mathcal{H}^\perp}-P_{22}(z)\alpha)(I_{\mathcal{H}^\perp}-\alpha^2)^{-1}
\end{bmatrix},
\end{multline*}
where $J:=-\alpha(I_{\mathcal{H}^\perp}-\alpha^2)^{-1}$. (Notice
that $J^*=-J$.) Next, writing
$$I_{\mathcal{H}^\perp}-P_{22}(z)\alpha=I_{\mathcal{H}^\perp}-\alpha^2+\alpha^2-P_{22}(z)\alpha,$$
we obtain
\begin{multline*}
(I_{\mathcal{H}^\perp}-P_{22}(z)\alpha)(I_{\mathcal{H}^\perp}-\alpha^2)^{-1}=
I_{\mathcal{H}^\perp}-(P_{22}(z)-\alpha)\alpha(I_{\mathcal{H}^\perp}-\alpha^2)^{-1}\\
=I_{\mathcal{H}^\perp}+(P_{22}(z)-\alpha)J.
\end{multline*}
Thus
\begin{multline*}
N(z)=\begin{bmatrix}
I_\mathcal{H} & -P_{12}(z)(P_{22}(z)-\alpha)^{-1}\\
0 & (P_{22}(z)-\alpha)^{-1}
\end{bmatrix}\begin{bmatrix} P_{11}(z) &
P_{12}(z)J\\
P_{21}(z) & I_{\mathcal{H}^\perp}+(P_{22}(z)-\alpha)J
\end{bmatrix}\\
=\begin{bmatrix}
P_{11}(z)-P_{12}(z)(P_{22}(z)-\alpha)^{-1}P_{21}(z) &
-P_{12}(z)(P_{22}(z)-\alpha)^{-1}\\
(P_{22}(z)-\alpha)^{-1}P_{21}(z) & (P_{22}(z)-\alpha)^{-1}+J
\end{bmatrix}\\
=\begin{bmatrix} P_{11}(z) & 0
\\
0 & J
\end{bmatrix}-\begin{bmatrix} P_{12}(z) \\
-I_{\mathcal{H}^\perp}
\end{bmatrix}(P_{22}(z)-\alpha)^{-1}\begin{bmatrix} P_{21}(z) & I_{\mathcal{H}^\perp}
\end{bmatrix}.
\end{multline*}
Consequently, \begin{multline} M(z) =\begin{bmatrix} P_{11}(z) & 0
\\
0 &
(\alpha+I_{\mathcal{H}^\perp})J(\alpha+I_{\mathcal{H}^\perp})^*
\end{bmatrix}\\
-\begin{bmatrix} P_{12}(z) \\
\alpha+I_{\mathcal{H}^\perp}
\end{bmatrix}(P_{22}(z)-\alpha)^{-1}\begin{bmatrix} P_{21}(z) &
-(\alpha+I_{\mathcal{H}^\perp})^*
\end{bmatrix},\label{eq:Bessm-M}
\end{multline}
and
\begin{multline*}
f(z) =\beta+V^*\begin{bmatrix} P_{11}(z) & 0
\\
0 &
(\alpha+I_{\mathcal{H}^\perp})J(\alpha+I_{\mathcal{H}^\perp})^*
\end{bmatrix}V\\
-V^*\begin{bmatrix} P_{12}(z) \\
\alpha+I_{\mathcal{H}^\perp}
\end{bmatrix}(P_{22}(z)-\alpha)^{-1}\begin{bmatrix} P_{21}(z) &
-(\alpha+I_{\mathcal{H}^\perp})^*
\end{bmatrix}V.
\end{multline*}
Representing $$V=\begin{bmatrix}
 V_1\\ V_2\end{bmatrix}\colon\nspace{C}{n}\to
 \mathcal{H}\oplus\mathcal{H}^\perp=\nspace{C}{m},$$
 we obtain
 \begin{multline}
f(z)
=\beta+V_1^*P_{11}(z)V_1+V_2^*(\alpha+I_{\mathcal{H}^\perp})J(\alpha+I_{\mathcal{H}^\perp})^*V_2\\
-(V_1^*
P_{12}(z)+V_2^*(\alpha+I_{\mathcal{H}^\perp}))(P_{22}(z)-\alpha)^{-1}(P_{21}(z)V_1-(\alpha+I_{\mathcal{H}^\perp})^*
V_2).\label{eq:Bessm-f}
\end{multline}
In other words, we have obtained a long resolvent representation
\eqref{eq:long-res} for $f$, with
\begin{eqnarray*}
A_{11}(z) &=& \beta+V_2^*(\alpha+I_{\mathcal{H}^\perp})J(\alpha+I_{\mathcal{H}^\perp})^*V_2+V_1^*P_{11}(z)V_1,\\
A_{12}(z) &=&  V_2^*(\alpha+I_{\mathcal{H}^\perp})+V_1^* P_{12}(z),\\
A_{21}(z) &=& -(\alpha+I_{\mathcal{H}^\perp})^* V_2+P_{21}(z)V_1,\\
A_{22}(z) &=& -\alpha+P_{22}(z).
\end{eqnarray*}
The linear pencil
$$A(z)=\begin{bmatrix}
A_{11}(z) & A_{12}(z)\\
A_{21}(z) & A_{22}(z)
\end{bmatrix}=A_0+z_1A_1+\cdots+z_dA_d$$
has the coefficients \begin{align}\label{eq:0-coef} A_0 &=\begin{bmatrix}
\beta+V_2^*(\alpha+I_{\mathcal{H}^\perp})J(\alpha+I_{\mathcal{H}^\perp})^*V_2
& V_2^*(\alpha+I_{\mathcal{H}^\perp})\\
-(\alpha+I_{\mathcal{H}^\perp})^* V_2 & -\alpha
\end{bmatrix},\\
A_k &=\begin{bmatrix}
V_1^*(P_k)_{11}V_1 & V_1^*(P_k)_{12}\\
(P_k)_{21}V_1 & (P_k)_{22}
\end{bmatrix},\label{eq:k-coefs}
\end{align}
where $P_k=\diag[0,\ldots,0,I_{m_k},0,\ldots,0]$, $k=1,\ldots,d$.
It is easy to check that the matrices $A_0$, \ldots, $A_d$ satisfy
\eqref{eq:NHBes}. We conclude that
$f\in\widetilde{\mathcal{B}}_d^{n\times n}$.
\end{proof}

\begin{rem}\label{rem:ADY1-vs-us}
It is not obvious from \eqref{eq:M_ADY1} that $M(z)$ and,
therefore, $f$ as in \eqref{eq:f_via_M} are Herglotz--Agler
functions on $\Pi^d$; see (3.1) and (4.3) in \cite{ADY1} for the
Nevanlinna-Agler-class version of $M$ and $f$. It takes several
pages in \cite{ADY1} (see Propositions 3.4 and 3.5 in there) to
show that $M$ is a Nevanlinna--Agler function over the upper
poly-halfplane. Our representation \eqref{eq:Bessm-M} for $M$ and
then our representation \eqref{eq:Bessm-f} for $f$ allow us to see
the inclusion of these functions to the corresponding
matrix-valued Herglotz--Agler classes over $\Pi^d$ immediately.
Indeed, the direction (0)$\Rightarrow$(1) in Theorem
\ref{thm:NHBes} follows from the fact that a linear pencil $A(z)$
as in \eqref{eq:lin-pencil} with $A_0^*=-A_0$ and $A_k=A_k^*\ge
0$, $k=1,\ldots,d$, is a rational Cayley inner Herglotz--Agler
function, and taking a Schur complement of $A(z)$ preserves this
property (it just changes the matrix size for the function
values).
\end{rem}

\begin{rem}\label{rem:b-char}
Theorem \ref{thm:NHBes} is a generalization of Theorem
\ref{thm:b}, or we can view Theorem \ref{thm:b} as a
specialization of Theorem \ref{thm:NHBes} to the Bessmertny\u{\i} class
$\mathcal{B}_d^{n\times n}$. Thus, $\mathcal{B}_d^{n\times n}$ can
be characterized as \begin{enumerate}
    \item[(0)] the subclass in $\widetilde{\mathcal{B}}_d^{n\times n}$
    consisting of functions with a long resolvent representation
    satisfying $A_0=0$, or
    \item[(1)] (in view of Theorem \ref{thm:bessm-class}) the subclass in $\mathcal{CIHA}^{\rm
    rat}(\Pi^d,\nspace{C}{n})$ satisfying the homogeneity
    condition \eqref{eq:homog}, or
    \item[(2$^\prime$)] the subclass of functions satisfying
    condition (2$^\prime$) of Theorem \ref{thm:NHBes} with the
    additional property that, along with the decomposition
    \eqref{eq:HA-polyhalfpl-decomp'}, they have the decomposition
    obtained from \eqref{eq:HA-polyhalfpl-decomp'} by replacing
    pluses by minuses, i.e., that \eqref{eq:pm-Bessm-decomp'}
    holds, or
    \item[(3)] the subclass of functions satisfying condition (3)
    of Theorem  \ref{thm:NHBes} with the
    additional property that the matrix $U$ is Hermitian.
\end{enumerate}
\end{rem}

Applying the single Cayley transformation
    over the variables to a function $f\in\mathcal{CIHA}^{\rm rat}(\Pi^d,\nspace{C}{n})$
     or the single Cayley transformation
    over the values to $\mathcal{F}=\mathcal{C}(f)$ (see \eqref{eq:single-Cayley}), we obtain
    $F\in\mathcal{CIHA}^{\rm rat}(\nspace{D}{d},\nspace{C}{n})$.
    By Theorem \ref{thm:CayinratHA}, $F$ admits a representation
    \eqref{eq:tf-HA}. The following theorem specializes
    \eqref{eq:tf-HA} to the case where $F$ is the single Cayley
    transform of a function $f\in\mathcal{B}_d^{n\times n}$.

\begin{thm}\label{thm:Bessm-vs-CIHA-polydisk}
 Let $f$ be a ${\mathbb C}^{n \times n}$-valued function of $d$ complex
 variables. Then $f\in\mathcal{B}_d^{n\times n}$ if and only if
 the function
\begin{equation}\label{eq:f-to-F}
F(\zeta)=f\left(\frac{1+\zeta_1}{1-\zeta_1},\ldots,\frac{1+\zeta_d}{1-\zeta_d}\right)
\end{equation}
 satisfies the condition (3) of Theorem \ref{thm:CayinratHA} with
 the additional properties
 \begin{enumerate}
    \item[(i)] $\beta=0$;
    \item[(ii)] $W=W^*$;
    \item[(iii)] ${\rm range}\,V\subseteq\mathcal{H}$, where
    $\mathcal{H}\subseteq\nspace{C}{m}$ is the eigenspace of $W$
    corresponding to the eigenvalue $1$.
 \end{enumerate}
\end{thm}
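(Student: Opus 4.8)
The plan is to obtain both directions by reusing, essentially verbatim, the two computations that already tie the representation \eqref{eq:tf-HA} to the long-resolvent form: the passage (3)$\Rightarrow$(0) in the proof of Theorem~\ref{thm:NHBes} and the passage (1)$\Rightarrow$(3) in the proof of Theorem~\ref{thm:CayinratHA}, while tracking self-adjointness throughout.

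For the ``if'' direction, suppose $F$ has a representation \eqref{eq:tf-HA} with $\beta=0$, $W=W^*$, and $\operatorname{range}V\subseteq\mathcal{H}=\ker(W-I_m)$. Since $f(z)=F\bigl(\frac{z_1-1}{z_1+1},\dots,\frac{z_d-1}{z_d+1}\bigr)$, I would run the computation proving (3)$\Rightarrow$(0) of Theorem~\ref{thm:NHBes} to produce the long-resolvent coefficients \eqref{eq:0-coef}--\eqref{eq:k-coefs}, and then observe that the three hypotheses force $A_0=0$. Concretely, partition $V=\bigl[\begin{smallmatrix}V_1\\V_2\end{smallmatrix}\bigr]$ along $\nspace{C}{m}=\mathcal{H}\oplus\mathcal{H}^\perp$: the inclusion $\operatorname{range}V\subseteq\mathcal{H}$ gives $V_2=0$; the hypothesis $W=W^*$ makes $W$ a Hermitian unitary whose full $(+1)$-eigenspace is $\mathcal{H}$, so $W|_{\mathcal{H}^\perp}=-I_{\mathcal{H}^\perp}$ and its Cayley transform $\alpha$ (hence also $J$) vanishes; and $\beta=0$ by assumption. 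Substituting $V_2=0$, $\alpha=0$, $\beta=0$ into \eqref{eq:0-coef} yields $A_0=0$, while \eqref{eq:k-coefs} becomes $A_k=T^*P_kT\ge 0$ with $T=\diag[V_1,I_{\mathcal{H}^\perp}]$ and $P_k$ an orthogonal projection. Thus $f$ has a long-resolvent representation with $A_0=0$ and $A_k=A_k^*\ge 0$, i.e.\ $f\in\mathcal{B}_d^{n\times n}$.

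For the ``only if'' direction, suppose $f\in\mathcal{B}_d^{n\times n}$. Since $\mathcal{B}_d^{n\times n}\subseteq\mathcal{P}_d^{n\times n}$ and $e=(1,\dots,1)\in\Pi^d$ is fixed by complex conjugation, $F(0)=f(e)$ is Hermitian by \eqref{eq:selfadj} and positive semidefinite by \eqref{eq:positive}; hence in the splitting $F(0)=\beta+\gamma$ of the proof of Theorem~\ref{thm:CayinratHA} one automatically has $\beta=0$ (condition (i)) and $\gamma=f(e)=\delta^*\delta$. I would then reduce to $F_+$ with $F_+(0)=I_r$ exactly as there, writing $F=\delta^*F_+\delta$; the associated $f_+(z)=F_+\bigl(\frac{z_1-1}{z_1+1},\dots\bigr)$ satisfies $f(z)=\delta^*f_+(z)\delta$, and since $\delta$ has full row rank while $f$ is rational, homogeneous, and Cayley inner, the same holds for $f_+$, so $f_+\in\mathcal{B}_d^{r\times r}$ by Remark~\ref{rem:b-char}. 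The crux is now to choose the unitary realization of $\mathcal{F}_+=\mathcal{C}(f_+)$ to be Hermitian: by part (3) of Theorem~\ref{thm:b} applied to $f_+$, the colligation $U=\left[\begin{smallmatrix}A&B\\C&D\end{smallmatrix}\right]$ may be taken with $U=U^*=U^{-1}$, and $D=\mathcal{F}_+(0)=0$ since $F_+(0)=I_r$. Carrying out the construction of Theorem~\ref{thm:CayinratHA} with this $U$ yields the representation \eqref{eq:tf-HA} with $\beta=0$, $W^*=A+BC$, and $V=B\delta$. The involution identity $U^2=I$ expands into $B^*B=I_r$, $AB=0$, and $A^2+BB^*=I_m$, which together with $A=A^*$ and $C=B^*$ give $W=A+BB^*=W^*$ with $W^2=I_m$ (condition (ii)), and $W(Bx)=(AB)x+B(B^*B)x=Bx$ for all $x$, so $\operatorname{range}V\subseteq\operatorname{range}B\subseteq\ker(W-I_m)=\mathcal{H}$ (condition (iii)).

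The main obstacle---indeed the only substantive point---is the bookkeeping that converts the single self-adjointness hypothesis $U=U^*$, equivalently the involution $U^2=I$, into the two separate structural features $W=W^*$ and $\operatorname{range}V\subseteq\mathcal{H}$, and, in the reverse direction, recognizing that precisely these two features collapse the skew-Hermitian coefficient $A_0$ of \eqref{eq:0-coef} to zero while keeping each $A_k$ positive semidefinite. Everything else is a direct appeal to the constructions already established in Theorems~\ref{thm:NHBes}, \ref{thm:CayinratHA}, and~\ref{thm:b}.
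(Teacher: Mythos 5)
Your proof is correct and follows the same overall architecture as the paper's: the sufficiency direction is, exactly as in the paper, a rerun of the computation (3)$\Rightarrow$(0) of Theorem~\ref{thm:NHBes}, where $W=W^*$ together with unitarity forces $W_0=-I_{\mathcal{H}^\perp}$, hence $\alpha=J=0$, and ${\rm range}\,V\subseteq\mathcal{H}$ forces $V_2=0$, so that \eqref{eq:0-coef}--\eqref{eq:k-coefs} collapse to $A_0=0$ and $A_k=A_k^*\ge 0$. In the necessity direction you deviate from the paper in two minor but genuine ways. First, where the paper proves a dedicated normalization result (Lemma~\ref{lem:Bessm-normalized}), obtaining $f=\delta^* f_+\delta$ with $f_+\in\mathcal{B}_d^{r\times r}$ and $f_+(e)=I_r$ by direct manipulation of long-resolvent representations and the maximum principle, you perform the reduction at the level of $F$ (as in the proof of Theorem~\ref{thm:CayinratHA}) and then certify $f_+\in\mathcal{B}_d^{r\times r}$ via the function-theoretic characterization in Remark~\ref{rem:b-char}: $f_+=(\delta\delta^*)^{-1}\delta f\delta^*(\delta\delta^*)^{-1}$ is rational and homogeneous, and it inherits the Cayley inner Herglotz--Agler property from $f$ because $\delta$ is surjective and $\delta^*$ injective. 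This spares you the lemma at the cost of invoking the full equivalence between $\mathcal{B}_d^{n\times n}$ and the homogeneous rational Cayley inner Herglotz--Agler class --- legitimate, since that equivalence is already established in the paper. Second, the paper extracts $W=W^*=W^{-1}$ and ${\rm range}\,B\subseteq\mathcal{H}$ from the block decomposition \eqref{eq:U-decomp}, concluding $W=\diag[A_0,\,I_{{\rm range}\,B}]$ with respect to $\ker B^*\oplus{\rm range}\,B$, whereas you read the same facts off the identities $B^*B=I_r$, $AB=0$, $A^2+BB^*=I_m$ coming from $U^2=I$ with $D=0$, $A=A^*$, $C=B^*$; your verification (including $W(Bx)=Bx$) is a slightly more elementary route to the identical conclusion, and both computations are sound.
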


For the proof of Theorem \ref{thm:Bessm-vs-CIHA-polydisk}, we will
need the following lemma.

\begin{lem}\label{lem:Bessm-normalized}
Let $f$ be a ${\mathbb C}^{n \times n}$-valued function of $d$
complex
 variables. Then $f\in\mathcal{B}_d^{n\times n}$ if and only if
 there exist a matrix $\delta\in\mathbb{C}^{r\times n}$ of full
 row rank $r$ ($={\rm rank}\,f(e)$, where $e=(1,\ldots,1)$) and a
 function $f_+\in\mathcal{B}_d^{r\times r}$ satisfying
 $f_+(e)=I_r$, such that $$f(z)=\delta^*f_+(z)\delta.$$
\end{lem}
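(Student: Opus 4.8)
The plan is to prove the two implications separately; the reverse (sufficiency) direction is a routine congruence of linear pencils, while the forward (necessity) direction requires a maximum-principle argument to pin down the kernel of $f(z)$. A byproduct of the first direction is that the Bessmertny\u{\i} class is stable under congruence $g\mapsto SgS^*$ by a rectangular matrix $S$, and this stability is reused in the second direction.

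For sufficiency, suppose $f(z)=\delta^*f_+(z)\delta$ with $f_+\in\mathcal{B}_d^{r\times r}$ and $\delta\in\mathbb{C}^{r\times n}$ of full row rank. I would take a long resolvent representation of $f_+$ built from a linear pencil $A^+(z)=z_1A_1^++\cdots+z_dA_d^+$ with $A_k^+=(A_k^+)^*\ge 0$ and upper-left block of size $r\times r$. Replacing each $A_k^+$ by $T A_k^+ T^*$, where $T=\diag[\delta^*,I]$, produces a new pencil with $A_0=0$ whose coefficients remain positive semidefinite (congruence preserves positivity) and whose Schur complement equals $\delta^*f_+(z)\delta=f(z)$, now with upper-left block of size $n\times n$. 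Hence $f\in\mathcal{B}_d^{n\times n}$, and the same computation shows that $\mathcal{B}_d$ is stable under $g\mapsto SgS^*$ for any rectangular $S$.

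For necessity, I first record that $\gamma:=f(e)$ is Hermitian positive semidefinite: since $\bar e=e$, the identity $f(\bar z)=f(z)^*$ gives $f(e)=f(e)^*$, and $f(z)+f(z)^*\ge 0$ on $\Pi^d$ gives $2f(e)\ge 0$. Write $r=\rank\gamma=\rank f(e)$ and factor $\gamma=\delta^*\delta$ with $\delta$ of full row rank $r$. The whole problem then reduces to the kernel inclusion $\ker f(z)\supseteq\ker\gamma$ for all $z\in\Pi^d$; the range inclusion $\operatorname{range}f(z)\subseteq\operatorname{range}\gamma$ follows by applying this at $\bar z$ and invoking $f(\bar z)=f(z)^*$. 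Granting both inclusions, $Pf(z)P=f(z)$ where $P=\delta^*(\delta\delta^*)^{-1}\delta$ is the orthogonal projection onto $\operatorname{range}\gamma=(\ker\gamma)^\perp$; setting $f_+(z)=SfS^*$ with $S=(\delta\delta^*)^{-1}\delta$ then yields $\delta^*f_+\delta=f$ and $f_+(e)=I_r$, while $f_+\in\mathcal{B}_d^{r\times r}$ by the congruence-stability established above.

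The main obstacle is therefore the kernel inclusion. I would fix $u\in\ker\gamma$ and consider the scalar function $g(z)=\langle f(z)u,u\rangle$ on the connected domain $\Pi^d$. Its real part $\Re g(z)=\langle\Re f(z)u,u\rangle$ is pluriharmonic and nonnegative (since $\Re f\ge 0$ there) and vanishes at the interior point $e$ (because $\gamma u=0$), so the minimum principle forces $\Re g\equiv 0$. Invoking condition (2$^{\prime}$) of Theorem \ref{thm:b}, I would write $f(z)=\sum_{k=1}^d z_k\phi_k(w)^*\phi_k(z)$ with rational $\phi_k$ regular on $\Omega_d$; specializing to $w=z$ gives $f(z)=\sum_k z_k\phi_k(z)^*\phi_k(z)$ and hence $\Re f(z)=\sum_k(\Re z_k)\phi_k(z)^*\phi_k(z)$. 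Then $0=\langle\Re f(z)u,u\rangle=\sum_k(\Re z_k)\|\phi_k(z)u\|^2$, and since every $\Re z_k>0$ on $\Pi^d$ this forces $\phi_k(z)u=0$ for all $k$, whence $f(z)u=\sum_k z_k\phi_k(z)^*\phi_k(z)u=0$. This gives $u\in\ker f(z)$, completing the kernel inclusion and the proof.
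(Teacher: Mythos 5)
Your proof is correct and follows essentially the same route as the paper: sufficiency by congruence of the linear pencil with $\diag[\delta^*,I_m]$, and necessity by establishing that $\ker f(z)$ contains $\ker f(e)$ on $\Pi^d$ via a maximum/minimum-principle argument and then compressing $f$ to $(\ker f(e))^\perp$ with a normalization at $e$. Your explicit kernel step (using the decomposition $f(z)=\sum_{k=1}^d z_k\phi_k(z)^*\phi_k(z)$ and the minimum principle for the nonnegative pluriharmonic function $\Re\langle f(z)u,u\rangle$) just fills in what the paper dispatches with the phrase ``by the maximum principle,'' and your pseudoinverse normalization $S=(\delta\delta^*)^{-1}\delta$ is equivalent to the paper's choice of an isometry onto $(\ker f(e))^\perp$ followed by scaling with $\widetilde f(e)^{\pm 1/2}$.
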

\begin{proof}
Suppose that $f_+\in\mathcal{B}_d^{r\times r}$, $f_+(e)=I_r$, and $f(z)=\delta^*f_+(z)\delta$ for some
$\delta\in\mathbb{C}^{r\times n}$. If $f_+$ has a long resolvent representation
\eqref{eq:long-res}--\eqref{eq:lin-pencil} (with $n$ replaced by $r$) with the coefficients $A_0=0$ and
$A_k=A_k\ge 0$, $k=1,\ldots,d$, then $f$ has a long resolvent representation with the coefficients $A_0=0$,
$$\begin{bmatrix}
\delta^* & 0\\
0 & I_m\end{bmatrix}A_k\begin{bmatrix}
\delta & 0\\
0 & I_m\end{bmatrix}\geq 0,\quad k=1,\ldots,d,$$ i.e., $f\in\mathcal{B}_d^{n\times n}$.

Conversely, suppose that $f\in\mathcal{B}_d^{n\times n}$. Let $\mathcal{X}:=\ker f(e)\subseteq\nspace{C}{n}$, so
that $\nspace{C}{n}=\mathcal{X}\oplus \mathcal{X}^\perp$. By the maximum principle, $\ker f(z)=\ker
f(e)=\mathcal{X}$ for every $z\in\Omega_d$ and, thus, for every $z$ a regular point of $f$. Let
$r:=\dim\mathcal{X}^\perp$ and let $\kappa\colon\nspace{C}{r}\to\nspace{C}{n}$ be an isometry with ${\rm
range}\,\kappa=\mathcal{X}^\perp$. Applying the argument as in the preceding paragraph, we obtain that
$$\widetilde{f}:=\kappa^*f(z)\kappa\in\mathcal{B}_d^{r\times r}.$$ Clearly $\widetilde{f}(e)=\widetilde{f}(e)^*>0$. Define
$$f_+(z)=\widetilde{f}(e)^{-1/2}\widetilde{f}(z)\widetilde{f}(e)^{-1/2}.$$
By the same argument as above, $f_+\in\mathcal{B}_d^{r\times r}$. We then have $f(z)=\delta^*f_+(z)\delta$ as
required, with $$\delta=\widetilde{f}(e)^{1/2}\kappa^*.$$
\end{proof}
\begin{proof}[Proof of Theorem \ref{thm:Bessm-vs-CIHA-polydisk}]
Suppose that $F$
 satisfies the condition (3) of Theorem \ref{thm:CayinratHA} with
 the additional properties (i)--(iii). Arguing as in the proof of (3)$\Rightarrow$(0) of Theorem
 \ref{thm:NHBes}, we obtain the following. First, we have $\beta=0$. Second, since the matrix $W$ is unitary and
 Hermitian, it has only two eigenvalues, $1$ and $-1$, therefore $W_0=-I_{\mathcal{H}^\perp}$ and $\alpha=J=0$.
 Third, since ${\rm range}\,V\subseteq\mathcal{H}$, we have $V_2=0$. Therefore, $f$ has a long resolvent
 representation \eqref{eq:long-res} with the coefficient matrices $A_0=0$ and $A_k\ge 0$, $k=1,\ldots,d$
 (see \eqref{eq:0-coef}--\eqref{eq:k-coefs}), i.e., $f\in\mathcal{B}_d^{n\times n}$.

Conversely, suppose that $f\in\mathcal{B}_d^{n\times n}$. Then by Lemma \ref{lem:Bessm-normalized} we have that
$f(z)=\delta^*f_+(z)\delta$, with some $\delta\in\mathbb{C}^{r\times n}$ of full
 row rank $r$ ($={\rm rank}\,f(e)$) and a
 function $f_+\in\mathcal{B}_d^{r\times r}$ satisfying
 $f_+(e)=I_r$, such that $f(z)=\delta^*f_+(z)\delta.$
Applying the Cayley transform over variables as in \eqref{eq:f-to-F}, we obtain
$$F(\zeta)=\delta^*F_+(\zeta)\delta,$$
with $F_+\in\mathcal{CIHA}(\nspace{D}{d},\nspace{C}{r})$ satisfying $F(0)=I_r$.  Arguing as in the proof of
(1)$\Rightarrow$(3) of Theorem \ref{thm:CayinratHA}, we obtain the following.  First, we have $\beta=0$. Second,
the function $\mathcal{F}_+$ in \eqref{eq:Cayley} can be represented as $\mathcal{F}_+=\mathcal{C}(f_+)$, so by
Theorem \ref{thm:b} the unitary matrix $U$ in a transfer-function realization \eqref{eq:tf-fin} for
$\mathcal{F}_+$ can be chosen Hermitian. Therefore, $C=B^*$ and we have \eqref{eq:U-decomp} with $A_0=A_0^*$ and
$C_0=B_0^*$. This, in turn, implies that
$$W=W^*=W^{-1}=\begin{bmatrix}
A_0 & 0\\
0 & I_{{\rm range}\, B}
\end{bmatrix}$$
with respect to the decomposition $\nspace{C}{m}=\ker B^*\oplus{\rm range}\, B$. Clearly, we have that ${\rm
range}\, B\subseteq\mathcal{H}$, where $\mathcal{H}$ is the eigenspace of $W$ corresponding to the eignevalue 1.
Then ${\rm range}\, V\subseteq\mathcal{H}$, where $V=B\delta$. Finally, we obtain the representation
\eqref{eq:tf-HA} for $F$ as desired.
\end{proof}

\section{Some remarks}

\subsection{} The results of Section \eqref{sec:RB_d} can be easily extended to the real rational Cayley inner
Herglotz--Agler class over $\Pi^d$, in view of possible applications in electrical engineering. The techniques
developed in this paper  would suffice for a proof, which we leave to the reader as an exercise.

\subsection{} In addition to the rational inner / Cayley inner Schur--Agler and Herglotz--Agler classes over the polydisk
$\nspace{D}{d}$ and
 the Herglotz--Agler class over the right poly-halfplane $\Pi^d$, one can also consider the rational inner Schur--Agler
  class
 over $\Pi^d$ and obtain analogues of Theorems \ref{thm:ratinSA}, \ref{thm:CayinratHA}, and \ref{eq:NHBes}. It is
 also possible to describe the image of the Bessmertny\u{\i} class $\mathcal{B}_d^{n\times n}$ in the latter class
 under the Cayley transform over the function values, similarly to part (3) of Theorem \ref{thm:b} and to Theorem
 \ref{thm:Bessm-vs-CIHA-polydisk}. These ingredients were unnecessary in our analysis and appeared somewhat isolated,
  so we left them aside.

\subsection{} In \cite[Example 5.1]{GKVW}, explicit examples of rational inner functions
on $\nspace{D}{d}$ which are not in the Schur--Agler class over
$\nspace{D}{d}$ were constructed. After applying appropriate
linear-fractional changes of variable to domains and ranges and
combinations thereof to those examples, one can obtain explicit
examples of rational Cayley inner functions on ${\mathbb D}^{d}$
which are not in the Herglotz--Agler class over ${\mathbb D}^{d}$,
of rational inner functions on $\Pi^{d}$ which are not in the
Schur--Agler class over $\Pi^{d}$, and of rational Cayley inner
functions on $\Pi^{d}$ which are not in the Herglotz--Agler class
over $\Pi^{d}$. However, the corresponding question for the
Bessmertny\u{\i} class remains unresolved, as already mentioned in the
Introduction: can there be a function $f\in\mathcal{P}_d^{n\times
n}$ which is not in $\mathcal{B}_d^{n\times n}$?

\end{document}